 \newtheorem*{corollary*}{Corollary}
 \newtheorem*{construction*}{Construction}
 \newtheorem*{definition*}{Definition}
 \newtheorem*{notation*}{Notation}
 \newtheorem*{lemma*}{Lemma}
 \newtheorem*{theorem*}{Theorem}
 \newtheorem*{remark*}{Remark}
 \newtheorem*{example*}{Example}
 \newtheorem*{conjecture*}{Conjecture}
 \newtheorem*{condition*}{Condition}
 \newtheorem*{result*}{Result}
 \newtheorem*{property*}{Property}
 \newtheorem*{cor*}{Corollary}
 \newtheorem*{const*}{Construction}
 \newtheorem*{defn*}{Definition}
 \newtheorem*{notn*}{Notation}
 \newtheorem*{lem*}{Lemma}
 \newtheorem*{thm*}{Theorem}
 \newtheorem*{rem*}{Remark}
 \newtheorem*{exm*}{Example}
 \newtheorem*{conj*}{Conjecture}
 \newtheorem{lemma}{Lemma}[subsection]
 \newtheorem{remark}[lemma]{Remark}
 \newtheorem{theorem}[lemma]{Theorem}
 \newtheorem{definition}[lemma]{Definition}
 \newtheorem{notation}[lemma]{Notation}
 \newtheorem{thm}[lemma]{Theorem}
 \newtheorem{prop}[lemma]{Proposition}
 \newtheorem{lem}[lemma]{Lemma}
 \newtheorem{defn}[lemma]{Definition}
 \newtheorem{cor}[lemma]{Corollary}
 \newtheorem{introtheorem}{Theorem}
 \crefname{introtheorem}{Theorem}{Theorems}
 \Crefname{introtheorem}{Theorem}{Theorems}
  \newtheorem{introthm}[introtheorem]{Theorem}
   \crefname{introthm}{Theorem}{Theorems}
 \Crefname{introthm}{Theorem}{Theorems}
  \crefname{introcorollary}{Corollary}{Corollaries}
 \Crefname{introcorollary}{Corollary}{Corollaries}
 \newtheorem{introcor}[introtheorem]{Corollary}
   \crefname{introcor}{Corollary}{Corollaries}
 \Crefname{introcor}{Corollary}{Corollaries}
   \crefname{introconjecture}{Conjectures}{Conjectures}
 \Crefname{introconjecture}{Conjecture}{Conjectures}
    \crefname{introconj}{Conjectures}{Conjectures}
 \Crefname{introconj}{Conjecture}{Conjectures}
     \crefname{introlem}{Lemma}{Lemmas}
 \Crefname{introlem}{Lemma}{Lemmas}
 \crefname{introremark}{Remark}{Remarks}
 \Crefname{introremark}{Remark}{Remarks}
  \crefname{introrem}{Remark}{Remarks}
 \Crefname{introrem}{Remark}{Remarks}
   \crefname{introprop}{Proposition}{Propositions}
 \Crefname{introrem}{Proposition}{Propositions}
   \crefname{introdefn}{Definition}{Definitions}
 \Crefname{introdefn}{Definition}{Definitions}
   \crefname{intronotn}{Notation}{Notations}
 \Crefname{intronotn}{Notation}{Notations}
   \crefname{introtask}{Task}{Tasks}
 \Crefname{introtask}{Task}{Tasks}
  \crefname{introprob}{Problem}{Problems}
 \Crefname{introprob}{Problem}{Problems}
   \crefname{introquestion}{Question}{Questions}
 \Crefname{introquestion}{Question}{Questions}
  \crefname{theorem}{Theorem}{Theorems}
 \Crefname{theorem}{Theorem}{Theorems}
  \crefname{thm}{Theorem}{Theorems}
 \Crefname{thm}{Theorem}{Theorems}
  \crefname{corollary}{Corollary}{Corollaries}
 \Crefname{corollary}{Corollary}{Corollaries}
   \crefname{cor}{Corollary}{Corollaries}
 \Crefname{cor}{Corollary}{Corollaries}
   \crefname{conjecture}{Conjecture}{Conjectures}
 \Crefname{conjecture}{Conjecture}{Conjectures}
    \crefname{conj}{Conjecture}{Conjectures}
 \Crefname{conj}{Conjecture}{Conjectures}
     \crefname{lem}{Lemma}{Lemmas}
 \Crefname{lem}{Lemma}{Lemmas}
      \crefname{lemma}{Lemma}{Lemmas}
 \Crefname{lemma}{Lemma}{Lemmas}
 \crefname{remark}{Remark}{Remarks}
 \Crefname{remark}{Remark}{Remarks}
  \crefname{rem}{Remark}{Remarks}
 \Crefname{rem}{Remark}{Remarks}
   \crefname{proposition}{Proposition}{Proposition}
 \Crefname{proposition}{Proposition}{Proposition}
    \crefname{prop}{Proposition}{Propositions}
 \Crefname{prop}{Proposition}{Propositions}
   \crefname{defn}{Definition}{Definitions}
 \Crefname{defn}{Definition}{Definitions}
   \crefname{notn}{Notation}{Notations}
 \Crefname{notn}{Notation}{Notations}
  \crefname{prob}{Problem}{Problems}
 \Crefname{prob}{Problem}{Problems}
   \crefname{question}{Question}{Questions}
 \Crefname{question}{Question}{Questions}
\newcommand{\alp}{\alpha}
\newcommand{\Ind}{\operatorname{Ind}}
\renewcommand{\Im}{\operatorname{Im}}
\newcommand{\HC}{\operatorname{HC}}
\newcommand{\Sym}{\operatorname{Sym}}
\newcommand{\Supp}{\operatorname{Supp}}
\newcommand{\im }{\operatorname{im}}
\newcommand{\R}{\mathbb{R}}
\newcommand{\bC}{\mathbb{C}}
\newcommand{\cE}{{\mathcal{E}}}
\newcommand{\cY}{{\mathcal{Y}}}
\newcommand{\cZ}{{\mathcal{Z}}}
\newcommand{\Sc}{\cS}
\newcommand{\Fre}{Fr\'echet }
\providecommand{\fa}{\mathfrak{a}}
\providecommand{\fn}{\mathfrak{n}}
\providecommand{\fh}{\mathfrak{h}}
\providecommand{\fk}{\mathfrak{k}}
\providecommand{\fg}{\mathfrak{g}}
\providecommand{\cC}{\mathcal{C}}
\providecommand{\cD}{\mathcal{D}}
\providecommand{\cE}{\mathcal{E}}
\providecommand{\cH}{\mathcal{H}}
\providecommand{\cI}{\mathcal{I}}
\providecommand{\cM}{\mathcal{M}}
\providecommand{\cS}{\mathcal{S}}
\providecommand{\cU}{\mathcal{U}}
\providecommand{\g}{\mathfrak{g}}
\newcommand{\simpAr}[2][r]{\ar@{}[#1]|-*[@]_{#2}}
\newcommand{\change}[1]{{{#1}}}
\newcommand{\DimaA}[1]{{{#1}}}
\newcommand{\DimaB}[1]{{{#1}}}
\newcommand{\DimaC}[1]{{{#1}}}
\newcommand{\RamiC}[1]{{{#1}}}
\newcommand{\oH}{\operatorname{H}}
\newcommand{\ot}{\leftarrow}
\newcommand{\into}{\hookrightarrow}
\newcommand{\onto}{\twoheadrightarrow}
\begin{document}

\author{Avraham Aizenbud}
\address{Faculty of Mathematics and Computer Science, Weizmann
Institute of Science, POB 26, Rehovot 76100, Israel }
\thanks{The first-named author was partially supported by ISF grant 687/13 and a Minerva foundation grant.}
\email{aizenr@gmail.com}

\author{Dmitry Gourevitch}
\email{dimagur@weizmann.ac.il}
\thanks{The second-named author was partially supported by ISF grant 756/12 and ERC Starting Grant StG 637912.}

\author{Bernhard Kr\"{o}tz}
\address{Institut f\"{u}r Mathematik, Universit\"{a}t Paderborn, Warburger
Str. 100, 33098 Paderborn, Germany}

\thanks{The third-named author was supported by ERC Advanced Investigators Grant HARG 268105.}
\email{bkroetz@gmx.de}

\author{Gang Liu}
\address{Institut \'{E}lie Cartan de Lorraine, Universit\'{e} de Lorraine, Ile du Saulcy, 57045 Metz,
France}
\email{gang.liu@univ-lorraine.fr}

\keywords{Comparison theorems, Lie algebra homology, group action, reductive group, Casselman-Wallach representation.}
\subjclass[2010]{20G05,20G10,20G20, 22E30, 22E45, 46F99}

\date{\today}
\title[Hausdorffness for Lie algebra homology of Schwartz spaces]{Hausdorffness for Lie algebra homology of Schwartz spaces and applications to the comparison conjecture}

\begin{abstract}
Let $H$ be a \DimaA{real algebraic} group acting equivariantly with finitely many orbits on a \DimaA{real algebraic} manifold $X$ and a \DimaA{real algebraic} bundle $\cE$ on $X$. Let $\fh$ be the Lie algebra of $H$.
Let $\Sc(X,\cE)$ be the space of
Schwartz sections of $\cE$. We prove that $\fh\Sc(X,\cE)$ is a closed subspace of $\Sc(X,\cE)$ of finite codimension.

We give an application of this result in the case when $H$ is a real spherical subgroup of a real reductive group $G$. We deduce an equivalence of two old conjectures due to Casselman: the automatic continuity and the comparison conjecture for zero homology. Namely, let $\pi$ be a Casselman-Wallach representation of $G$ and $V$ be the corresponding Harish-Chandra module. Then the natural morphism of coinvariants $V_{\fh}\to \pi_{\fh}$ is an isomorphism if and only if any linear $\fh$-invariant functional on $V$ is continuous in the topology induced from $\pi$. The latter statement is known to hold in two important special cases: if $H$ includes
 a symmetric subgroup, and if $H$ includes the nilradical of a minimal parabolic subgroup of $G$.
\end{abstract}

\maketitle

Note the erratum in the end.

\section{Introduction}\label{sec:intro}

In this  paper we start to develop the theoretical background for
comparison theorems in homological representation theory attached
to a real reductive group $G$.

\par The algebraic side of representation theory for $G$ is encoded
in the theory of Harish-Chandra modules $V$. These are modules for the
Lie algebra $\fg$ of $G$ with a compatible algebraic action of a fixed
maximal compact subgroup $K$ of $G$.   According to Casselman-Wallach
(see \cite[Chapter 11]{Wal2} or \cite{CasGlob} or, for a different approach, \cite{BK}) Harish-Chandra modules $V$ can be naturally completed to
smooth moderate growth modules $V^\infty$ for the group $G$.
Somewhat loosely speaking one might think of $V$, resp. $V^\infty$, as the regular, resp. smooth,
functions on some real algebraic variety.

\par Fix a subalgebra $\fh<\fg$. As $V\subset V^\infty$ we have natural
maps
$$ \Phi_p: \oH_p(\fh, V) \to \oH_p(\fh, V^\infty)\, .$$

If $\fh$ is a real spherical
subalgebra, then $\Phi_p$ is conjectured to be an isomorphism for all $p$.
Note that $\oH_p(\fh, V)$ is finite dimensional (see \Cref{compa}).
If $\fh$ is a maximal unipotent subalgebra then this conjecture  is
the still not fully established Casselman comparison theorem (see \cite{HT} for $G$ split). We show that for the case $p=0$ this conjecture is equivalent to a generalization of the Casselman  automatic continuity theorem. Namely, we prove the following theorem.

\begin{introthm}[See \S \ref{compa}]\label{thm:IntroCompa}
 Let $H\subset G$ be a real spherical subgroup and $\fh$ be its Lie algebra. Let $V$ be a Harish-Chandra module and $V^{\infty}$ be its smooth globalization. Then $\oH_0(\fh, V^\infty)$ is separated (Hausdorff) and finite-dimensional, and the natural homomorphism
$$\Phi_0:\oH_0(\fh, V)\to  \oH_0(\fh, V^\infty)$$
is an epimorphism. Furthermore, it is an isomorphism if and only if any $\fh$-invariant linear functional on $V$ is continuous in the topology induced from $V^\infty$.
\end{introthm}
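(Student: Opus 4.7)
The plan is to deduce the theorem from the abstract Hausdorffness theorem for Schwartz spaces (applied to a Nash $H$-manifold with finitely many orbits) combined with Casselman's embedding theorem. The geometric input is that real sphericality of $H$ is equivalent to $H$ acting with finitely many orbits on the real flag variety $G/P$, where $P \subset G$ is a minimal parabolic. For any finite-dimensional representation $\sigma$ of $P$, the abstract theorem therefore applies to $\mathcal{S}(G/P, \cE_\sigma)$ -- which coincides with the smooth sections, since $G/P$ is compact -- yielding that $\fh \cdot \mathcal{S}(G/P, \cE_\sigma)$ is a closed subspace of finite codimension.

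The main step is to show that $\oH_0(\fh, V^\infty) = V^\infty / \fh V^\infty$ is Hausdorff and finite-dimensional. I would realize $V^\infty$ as a topological quotient of some smooth principal series $I^\infty := \mathcal{S}(G/P, \cE_\sigma)$ via a continuous $G$-equivariant surjection $q \colon I^\infty \twoheadrightarrow V^\infty$. Such a realization is obtained by applying Casselman's subrepresentation theorem to the contragredient and dualizing; the Casselman--Wallach automatic continuity theorem then guarantees that $q$ is strict, i.e., a topological quotient. The kernel $\ker q$ is closed, and $\fh I^\infty$ is closed of finite codimension, so $\fh I^\infty + \ker q$ is closed of finite codimension -- it is the preimage in $I^\infty$ of a subspace of the finite-dimensional quotient $I^\infty/\fh I^\infty$. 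Applying $q$ yields $\fh V^\infty = q(\fh I^\infty + \ker q)$ closed of finite codimension in $V^\infty$, proving the claim. I expect this descent step -- which relies essentially on the topological quotient structure of $q$, not merely on its algebraic surjectivity -- to be the most delicate point; everything else is soft.

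Surjectivity of $\Phi_0$ is then immediate: $V$ is dense in $V^\infty$, so its image in the finite-dimensional Hausdorff space $\oH_0(\fh, V^\infty)$ is dense, hence all of it. For the final equivalence I would pass to linear duals. Since $\Phi_0$ is surjective, it is an isomorphism if and only if its transpose $\Phi_0^{\vee} \colon (V^\infty/\fh V^\infty)^* \to (V/\fh V)^*$ is surjective as a map of abstract vector spaces. But $(V/\fh V)^*$ is exactly the space of all $\fh$-invariant linear functionals on $V$, while $(V^\infty/\fh V^\infty)^*$ -- being the dual of a finite-dimensional Hausdorff space -- is the space of all continuous $\fh$-invariant functionals on $V^\infty$. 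By density of $V$ in $V^\infty$ combined with the Hahn--Banach theorem, restriction identifies the latter with those $\fh$-invariant functionals on $V$ that are continuous in the topology induced from $V^\infty$. Thus surjectivity of $\Phi_0^{\vee}$ is precisely the stated automatic continuity condition.
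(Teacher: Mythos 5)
Your proposal follows the same overall strategy as the paper's proof of \Cref{thm:compa}: reduce to a smooth principal series, apply \Cref{thm:main} there, get surjectivity of $\Phi_0$ from density of $V$ in $V^\infty$, and obtain the equivalence with automatic continuity by dualizing. The minor variations are fine. You apply \Cref{thm:main} to the $H$-action on $G/P$ (finitely many orbits by \cite{KS1}), whereas the paper applies it to the $H\times P$-action on $G$ and uses the integration surjection $\Sc(G,V\otimes\Delta_P)_{(\fh\times\mathfrak{p})}\onto \pi_{\fh}$; these are equivalent. Your descent step is correct: $\fh I^\infty+\ker q$ is the preimage under $I^\infty\to I^\infty/\fh I^\infty$ of a subspace of a finite-dimensional Hausdorff space, hence closed of finite codimension, and since it is saturated its image $\fh V^\infty$ under the quotient map $q$ is closed of finite codimension. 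This is a clean alternative to the paper's order of operations, which first deduces finite-dimensionality of $\pi_\fh$ from the induced surjection on coinvariants and only then gets closedness of $\fh\pi$ from \Cref{lem:FinGood}. Note also that strictness of $q$ is automatic from the open mapping theorem once one knows $q$ is a continuous surjection of \Fre spaces onto $V^\infty$ with its Casselman--Wallach topology, which is the only input you really need from that theory.

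The one point that needs repair is the reduction for reducible $V$. The quotient form of the subrepresentation theorem quoted in the paper (\Cref{thm:CasSubRep}, from \cite{CM}) applies to \emph{irreducible} $\pi$ only, so ``applying Casselman's theorem to the contragredient and dualizing'' does not, as stated, realize the globalization of an arbitrary finite-length Harish-Chandra module as a quotient of a single principal series. Either invoke the strong form of Casselman's embedding theorem, valid for all finitely generated admissible modules, or argue as the paper does: induct on the length of $\pi$, using right-exactness of $\oH_0(\fh,-)$ so that in a short exact sequence $0\to\sigma\to\pi\to\tau\to 0$ in $\cM(G)$ separatedness and finite-dimensionality of $\sigma_{\fh}$ and $\tau_{\fh}$ imply the same for $\pi_{\fh}$ (finite dimension is inherited from the exact sequence $\sigma_\fh\to\pi_\fh\to\tau_\fh\to 0$, and separatedness then follows again from \Cref{lem:FinGood}). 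With that addition your argument is complete.
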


In \S \ref{compa} we prove a generalization of this theorem that concerns $\oH_0(\fh, V\otimes \chi)$, where $\chi$ is any tempered character of $H$.

Since the automatic continuity of $\fh$-invariant functionals was shown in several cases in  \cite{BK,vdBD,BD}, we obtain the following corollary.

\begin{introcor}\label{cor:main}[See \S \ref{compa}]
If \RamiC{either }$H$ contains a symmetric subgroup of $G$ \DimaB{ fixed by an involution that commutes with the Cartan involution or} $H$ contains the unipotent radical of a minimal parabolic subgroup of $G$ then $\Phi_0$ is an isomorphism.
\end{introcor}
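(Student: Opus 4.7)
The strategy is to reduce the corollary directly to Theorem A. That theorem asserts, under the real spherical hypothesis on $H$, the equivalence between $\Phi_0$ being an isomorphism and the automatic continuity of every $\fh$-invariant linear functional on $V$ with respect to the topology induced from $V^\infty$. So the task splits naturally into two pieces: verifying real sphericity of $H$ in the two listed cases, and invoking the already-established automatic continuity results.

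First, I would check real sphericity. A real symmetric subgroup $H_0 \subset G$ is classically real spherical, i.e.\ a minimal parabolic $P_{\min}$ has an open orbit on $G/H_0$. Since the collection of $H$-orbits on $G/P_{\min}$ refines the collection of $H_0$-orbits whenever $H \supset H_0$, an overgroup of a real spherical subgroup is again real spherical; thus $H$ is real spherical in case~(1). In case~(2), the unipotent radical $N$ of $P_{\min}$ is itself real spherical because the open Bruhat cell is an open $N$-orbit on $G/P_{\min}$, and again passing to an overgroup $H \supset N$ preserves sphericity.

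Next, the automatic continuity input. An $\fh$-invariant functional on $V$ is a fortiori $\fh_0$-invariant for any subalgebra $\fh_0 \subset \fh$, so it suffices to know automatic continuity for the smaller subalgebra. For $\fh_0$ the Lie algebra of a symmetric subgroup, this is precisely what is proved in \cite{vdBD,BD}. For $\fh_0$ the nilradical of a minimal parabolic, this is the classical Casselman automatic continuity theorem in the extended form of \cite{BK}. Combined with Theorem A, this yields $\Phi_0$ as an isomorphism in both cases.

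The main obstacle I expect is not a substantial mathematical difficulty but a bookkeeping check: one must confirm that the exact formulation of ``automatic continuity'' supplied by \cite{BK,vdBD,BD} matches the hypothesis of Theorem A, namely continuity of functionals on the Harish-Chandra module $V$ with respect to the subspace topology inherited from the Casselman--Wallach globalization $V^\infty$. Since $V$ is dense in $V^\infty$ and $V^\infty$ is a Fr\'echet space, this continuity is equivalent to extendability to a continuous functional on $V^\infty$, which is the form in which automatic continuity is usually stated in the cited works. Once this compatibility is made explicit, the deduction is purely formal.
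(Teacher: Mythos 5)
Your proposal is correct and follows essentially the same route as the paper: Theorem~\ref{thm:IntroCompa} (i.e.\ Theorem~\ref{thm:compa}) reduces the claim to automatic continuity, which is inherited by overgroups because an $\fh$-invariant functional is a fortiori $\fh_0$-invariant, and then one cites \cite{vdBD,BD} and \cite{BK} for the symmetric and nilradical cases respectively --- exactly the content of Remark~\ref{rem:compa}. Your explicit verification that overgroups of real spherical subgroups are again real spherical is a small point the paper leaves implicit, but it is the same argument in substance.
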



The main part of \Cref{thm:IntroCompa} is the statement that $\oH_0(\fh, \pi)$ is separated and finite-dimensional. By  the Casselman subrepresentation theorem it reduces to the case of principal series. This case in turn is a special case of the following statement.

\par Let  $H$ be a real algebraic group.
Let $X$ be a real algebraic manifold and $\cE$ be a real algebraic vector bundle over $X$. Assume that $H$ acts
equivariantly on $X$ and $\cE$. Let $\Sc(X,\cE)$ be
the space of Schwartz sections with respect to $\cE\longrightarrow X$.
Then $\Sc(X,\cE)$ is a  nuclear Fr\'{e}chet  space and $H$ acts smoothly on $\Sc(X,\cE)$.  Let $\fh$ be the Lie algebra of $H$.

Since the action of $H$ on $\Sc(X,\cE)$ is
smooth, the space $\Sc(X,\cE)$ becomes an $\fh$-module.
Equipped with the quotient topology, each homological space $\oH_i(\fh,\Sc(X,\cE))$ becomes a
 topological vector space.
The main theorem of this paper is:

\begin{introtheorem}[See \S \ref{sec:PfMain}]\label{thm:main} Suppose that the number of $H$-orbits in $X$ is finite and let $\chi$ be a tempered character of $H$. Then
$\oH_0(\fh,\Sc(X,\cE)\otimes \chi)$ is
separated and finite-dimensional.
In other words, the subspace $\fh\Sc(X,\cE)\subset \Sc(X,\cE)$ (where the action of $\fh$ is twisted by $\chi$) is closed and has finite codimension.
\end{introtheorem}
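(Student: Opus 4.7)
I would prove the theorem by induction on the number $n$ of $H$-orbits in $X$. For the inductive step, pick a closed $H$-orbit $Z \subset X$ with open $H$-invariant complement $U := X \setminus Z$ (having strictly fewer orbits), and invoke the short exact sequence of nuclear Fr\'echet $H$-modules
$$0 \to \Sc(U, \cE|_U) \to \Sc(X, \cE) \to Q \to 0$$
provided by the Nash-geometric theory of Schwartz sections, where $Q := \Sc(X, \cE)/\Sc(U, \cE|_U)$ is the space of Schwartz jets along $Z$. Right-exactness of $\fh$-coinvariants (twisted by $\chi$) then yields
$$\oH_0(\fh, \Sc(U, \cE|_U) \otimes \chi) \to \oH_0(\fh, \Sc(X, \cE) \otimes \chi) \to \oH_0(\fh, Q \otimes \chi) \to 0,$$
so by the inductive hypothesis it suffices to treat $\oH_0(\fh, Q \otimes \chi)$ together with the base case $n = 1$.

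For the base case $X = H/L$ (with $L$ the stabilizer of a chosen base point), $\cE$ corresponds to a finite-dimensional Nash $L$-representation $W$, and I would establish a topological Shapiro-type isomorphism
$$\oH_0(\fh, \Sc(H/L, \cE) \otimes \chi) \;\cong\; \oH_0(\fl, W \otimes \chi|_L \otimes \delta)$$
for a suitable one-dimensional modular character $\delta$ of $\fl$. The right-hand side is the coinvariant space of a finite-dimensional $\fl$-module, hence automatically finite-dimensional and separated. To handle the jet quotient $Q$, I would use its canonical filtration by jet order, whose graded pieces $\Sc(Z, \cE|_Z \otimes \Sym^k N^*_{Z/X})$ are again covered by the single-orbit base case applied to $Z$. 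A vanishing argument---based on the growth of the infinitesimal character of the stabilizer action on $\Sym^k N^*_{Z/X}$ in high degrees---would show that only finitely many graded pieces contribute nontrivially, giving the required finite-dimensionality of $\oH_0(\fh, Q \otimes \chi)$.

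The principal obstacle is the Hausdorffness statement, since a subspace of finite algebraic codimension in a Fr\'echet space need not be closed. My strategy is to reduce Hausdorffness to the single-orbit case via the inductive exact sequence (using that a quotient of a Fr\'echet space by a closed subspace of finite codimension remains Hausdorff), and then to verify it directly on a single orbit by exhibiting finitely many \emph{continuous} linear functionals whose common kernel is $\fh \cdot (\Sc(H/L, \cE) \otimes \chi)$. Equivalently, this reduces to the automatic continuity of $\fl$-invariant functionals on the finite-dimensional space $W \otimes \chi|_L \otimes \delta$, which is trivial. Combined with the inductive argument, this yields the theorem.
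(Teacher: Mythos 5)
Your overall skeleton (induction on the number of orbits, the short exact sequence attached to a closed orbit $Z$ with open complement $U$, right-exactness of twisted coinvariants, reduction to the single-orbit case and to the jet quotient $Q$) coincides with the paper's. The decisive gap is in your treatment of $Q$. First, the proposed vanishing of $\oH_0(\fh,\Sc(Z,\cE|_Z\otimes\Sym^k N^*_{Z/X})\otimes\chi)$ for large $k$ is false in the stated generality: $H$ is an arbitrary Nash group, and when the stabilizer of a point of $Z$ acts unipotently (or trivially) on the conormal space, all symmetric powers carry the same infinitesimal character and infinitely many graded pieces contribute nontrivially. The paper's mechanism is entirely different: the coinvariants of the finite-order truncations $Q/F^i$ are finite-dimensional, hence separated, hence dual to subspaces of $\Sc^*(X,\cE)^{(\fh,-\chi)}$, whose dimension is \emph{uniformly} bounded by the holonomicity theorem of Aizenbud--Gourevitch--Minchenko (Theorem \ref{thm:Fin}); this forces the sequence $\oH_0(\fh,(Q/F^i)\otimes\chi)$ to stabilize. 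Second, $Q$ is not the direct sum of its graded pieces but the projective limit $\lim_{\ot}Q/F^i$ (Borel's lemma), and $\oH_0$ does not commute with projective limits for free; the paper invokes Grothendieck's Mittag-Leffler criterion, which requires finite-dimensionality of $\oH_1(\fh,(Q/F^i)\otimes\chi)$ --- and hence finite-dimensionality of \emph{all} higher homologies in the transitive case, which your $\oH_0$-only Shapiro-type isomorphism does not supply. (The paper proves the transitive case for all $p$ via the relative de-Rham complex with Schwartz coefficients; your asserted identification of $\oH_0(\fh,\Sc(H/L,\cE)\otimes\chi)$ with $\oH_0(\fl,W\otimes\cdots)$ is itself a nontrivial claim for Schwartz induction and essentially contains the Hausdorffness statement you are trying to prove.)

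On Hausdorffness more generally: your ``principal obstacle'' dissolves once finite-dimensionality is established, because $\fh\Sc(X,\cE)$ is the image of a continuous linear map of nuclear Fr\'echet spaces, and any such map with finite-codimensional image is strict by the open mapping theorem (Lemma \ref{lem:FinGood}); so there is no need to exhibit continuous functionals cutting out the image. Conversely, your proposed reduction of Hausdorffness along the inductive right-exact sequence is not justified as stated: separatedness of the middle term of a right-exact sequence of coinvariants does not follow from separatedness of the outer terms, so the clean order of argument is finite-dimensionality first, closedness second.
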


\DimaC{\begin{remark*}
We do not know whether the assumption on $\chi$ being tempered is necessary.
Some evidence that the assumption may be superfluous in the case of nilpotent $H$ can be found in \cite{CHM}.
\end{remark*}}

\subsection{Structure of the paper}
In \Cref{sec:prel} we give the necessary preliminaries on nuclear \Fre spaces, Schwartz functions on Nash manifolds, and the homological behaviour of inverse limits.

In \Cref{sec:PfMain} we prove \Cref{thm:main}. In order to sketch the proof let us assume that the character $\chi$ 
is trivial. We first prove it for the case of transitive action, in which we show that all the homologies of $\Sc(X,\cE)$ are finite-dimensional. We prove this using relative de-Rham complex with Schwartz coefficients, similarly to the proof of the Shapiro lemma.

To deduce the theorem, we first note that
any morphism of \Fre spaces of finite corank is a strict morphism.
We also note that the dual space to $\oH_0(\fh,\Sc(X,\cE))$, i.e. the space $\Sc^*(X,\cE)^{\fh}$ of $\fh $-invariant tempered distributional sections of $\cE$ on $X$, was shown to be finite-dimensional in \cite{AGM}, using a theorem by Bernstein and Kashiwara from the theory of D-modules. Thus, the finite-dimensionality of $\oH_0(\fh,\Sc(X,\cE))$ is equivalent to its separatedness.

We prove both statements by induction on the number of orbits. The induction step follows from the statement that for any closed orbit $Z\subset X$, the homology $\oH_0(\fh,\Sc(X,\cE)/\Sc(X\setminus Z,\cE))$ is finite-dimensional. By a generalization of E. Borel's lemma, the space $\Sc(X,\cE)$ has a decreasing filtration $F^i$ by closed subspaces, such that $F^i/F^{i+1}=\Sc(Z,\cE_i)$, for some \DimaA{real algebraic} bundles $\cE_i$ on $Z$, $\bigcap F^i=\Sc(X\setminus Z,\cE) $ and
\begin{equation}\label{=IntInvLim}
\Sc(X,\cE)/\Sc(X\setminus Z,\cE)\cong \lim_{\ot}\Sc(X,\cE)/F^i.
\end{equation}
It was shown by Grothendieck that under certain conditions homology commutes with inverse limits. In our case, these conditions are satisfied by the finiteness of the dimension of the homologies of $\Sc(Z,\cE_i)$. Thus, \eqref{=IntInvLim} implies
\begin{equation}
\oH_0(\fh,\Sc(X,\cE)/\Sc(X\setminus Z,\cE))\cong\lim_{\ot} \oH_0(\fh,\Sc(X,\cE)/F^i).
\end{equation}
Since the dimension  of $\oH_0(\fh,\lim \limits _{\ot}\Sc(X,\cE)/F^i)$ is bounded by  the dimension  of $\Sc^*(X,\cE)^{\fh}$, we get that the sequence $\oH_0(\fh,\Sc(X,\cE)/F^i)$ stabilizes and its inverse limit is finite-dimensional.

In \Cref{compa} we deduce \Cref{thm:IntroCompa} from \Cref{thm:main} and the Casselman subrepresentation theorem. We also deduce from \cite{KS2} that, under some genericity assumption (see \eqref{=gen}), all the homologies $\oH_p(\fh, V)$ are finite dimensional for any Harish-Chandra module $V$.

\subsection{Acknowledgements}

We would like to thank Eitan Sayag and Joseph Bernstein for fruitful discussions, and the anonymous referee for useful remarks.
\section{Preliminaries}
\subsection{Nuclear \Fre spaces }\label{sec:prel}

Let us begin with a brief recall on some standard facts about nuclear \Fre spaces. For more details we refer the reader to \cite[Appendix A]{CHM}.

\par A topological vector space $V$ is called {\it separated} or {\it Hausdorff} if $\{0\} \subset V$ is closed.
Non-separated topological vector spaces typically arise as quotients $V/U$ where $U\subset V$ is a non-closed
subspace of a topological vector space $V$.
\par If $V$ is a topological vector space, then we denote by $V'$ its topological dual, that is the space
of continuous linear functionals $V\to \bC$. We endow $V'$ with the strong dual topology (i.e. the topology of uniform convergence on bounded sets) and note that
$V'$ is a  separated topological vector space.

\par If $T: V \to W$ is a morphism of topological vector spaces, then we denote by $T': W'\to V'$ the corresponding
dual morphism.  A morphism $T: V\to W$ is called {\it strict}, provided that  $T$ induces an isomorphism
of topological vector spaces $V/\ker T \simeq \im T$.

In this paper we consider Nuclear \Fre spaces, NF-spaces for short. If $F$ is a NF-space and $E\subset F$ is a closed subspace then both $E$ and $F/E$ (in the induced and quotient topologies respectively) are NF-spaces. Moreover, any surjective morphism of \Fre spaces is open (see  \cite[Theorem 17.1]{Tre}). Thus, a morphism of NF-spaces is strict if and only if it has closed image. The following lemma says that morphisms of finite corank are always strict.

\begin{lemma}[{\cite[Lemma A.1]{CHM}}]\label{lem:FinGood}
Let $T:E\to F$ be a continuous linear map between $NF$-spaces. Assume that the image $\im T$ is if finite codimension in $F$. Then $T$ is a strict morphism.
\end{lemma}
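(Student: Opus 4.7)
My plan is as follows. As remarked immediately before the lemma, a morphism between NF-spaces is strict if and only if its image is closed, so I would first reduce the problem to showing that $\im T$ is closed in $F$.

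The strategy I would try is to promote $T$ to a surjection by adjoining a finite-dimensional correction factor and then invoke the open mapping theorem. Concretely, I would choose an algebraic complement $W\subset F$ to $\im T$; since $\im T$ has finite codimension, $W$ is finite-dimensional and therefore closed in $F$. I would then form the Fr\'echet space $E\oplus W$ (nothing is lost here, since adjoining a finite-dimensional summand preserves the Fr\'echet property) and consider the continuous linear map
$$ S\colon E\oplus W\longrightarrow F,\qquad S(e,w)=T(e)+w,$$
which is surjective by construction. The open mapping theorem for Fr\'echet spaces will force $S$ to be open, hence a topological quotient map.

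To close the argument, I would observe that $\ker S\subseteq E\oplus\{0\}$: indeed, any $(e,w)\in\ker S$ satisfies $w=-T(e)\in W\cap\im T=\{0\}$. Consequently the closed subspace $E\oplus\{0\}\subset E\oplus W$ is saturated with respect to the quotient map $S$, and so its image $S(E\oplus\{0\})=\im T$ must be closed in $F$, which is exactly what is needed.

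I do not expect any serious obstacle here. The only ingredient of real substance is the open mapping theorem, and the only combinatorial point to double-check is that $W\cap\im T=\{0\}$, which is immediate from the choice of $W$ as an algebraic complement. Notably, nuclearity plays no role in this argument; only the Fr\'echet structure of $E$ and $F$ is used.
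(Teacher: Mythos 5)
Your argument is correct: the reduction to closedness of $\im T$ is justified by the open mapping theorem (as noted in the paper right before the lemma), the auxiliary surjection $S\colon E\oplus W\to F$ is open, and the saturation of $E\oplus\{0\}$ under $S$ (which follows from $W\cap\im T=\{0\}$) does force $S(E\oplus\{0\})=\im T$ to be closed. The paper itself does not prove this lemma but cites \cite[Lemma A.1]{CHM}, and your argument is essentially the standard one given there, including the observation that only the Fr\'echet structure, not nuclearity, is used.
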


In this paper we will consider homologies of modules over a Lie algebra $\fh$, as the homologies of the corresponding Koszul complexes. In particular, $\oH_0(\fh,V)=V_{\fh}:=V/\fh V$. If $V$ is a topological vector space then we consider the quotient topology on this space. The main results of this paper say that in some important cases this topology is separated.

\subsection{Schwartz spaces on Nash manifolds}\label{sec:Nash}

Nash manifolds are smooth semi-algebraic manifolds.
Nash manifolds are equipped with the \emph{restricted topology},
in which open sets are open semi-algebraic sets. This is not a
topology in the usual sense of the word as infinite unions of open
sets are not necessarily open sets in the restricted topology.
However, finite unions of open sets are open
 and therefore in the restricted topology we consider only
finite covers. In particular, if $\cE$ over $X$ is a Nash vector bundle
it means that there exists a \underline{finite} open cover $U_i$
of $X$ such that $\cE|_{U_i}$ is trivial.
A Lie group $G$ is called a {\it Nash group} provided that $G$ is a Nash manifold
and all group operations being Nash maps.
For more details  on Nash
manifolds we refer the reader to \cite{BCR,Shi} and for details  on Nash groups to \cite{Sun}.

Schwartz functions are functions that decay, together with all
their derivatives, faster than any polynomial. On $\R^n$ it is the
usual notion of Schwartz function. We also need the notion of tempered functions, i.e. smooth
functions that grow not faster than a polynomial,  and so do all their derivatives.
 For precise
definitions of those notions we refer the reader to \cite{AGSc}.
In this section we summarize some elements of the theory of Schwartz functions.

\par Fix a Nash manifold $X$ and a Nash bundle $\cE$ over $X$. We denote
by $\Sc(X)$ the \Fre space of
Schwartz functions on $X$ and by $\Sc(X,\cE)$
the space of Schwartz sections of $\cE$.
We collect a few central facts:

\begin{thm}\label{thm:ScBasic}
\begin{enumerate}
\item $\Sc(\R ^n)$ = Classical
Schwartz functions on $\R ^n$. See \cite[Theorem 4.1.3]{AGSc}.
\item \label{p:SchFre} The space $\Sc(X,\cE)$ is a nuclear \Fre space. See \cite[Corollary 2.6.2]{AGRhamShap}.
\item \label{prop:ExtClose} Let $Z \subset X$ be a closed Nash submanifold. Then the
restriction maps  $\Sc(X,\cE)$ onto $\Sc(Z,\cE|_Z)$. See \cite[\S 1.5]{AGSc}.
\item \label{p:Tempered} If $\alp$ is a tempered function on $X$ then $\alp\Sc(X,\cE)\subset \Sc(X,\cE)$. See \cite[Proposition 4.2.1]{AGSc}. One can show that this is a defining property of tempered functions.
\end{enumerate}
\end{thm}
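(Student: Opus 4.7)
The plan is to treat the four parts of the theorem in parallel, exploiting a single reduction scheme: any Nash manifold $X$ admits a finite cover by open Nash subsets $U_i$, each Nash-diffeomorphic to a closed Nash submanifold of some $\R^{n_i}$, and one has a Nash partition of unity $\{\psi_i\}$ subordinate to this cover (see \cite{BCR,Shi}). Applied with the Nash bundle $\cE$ trivialised over each $U_i$, this reduces every statement about $\Sc(X,\cE)$ to a statement about $\Sc$ on open Nash subsets of $\R^n$, and ultimately to properties of the classical Schwartz space $\Sc(\R^n)$.

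For (1), I would compare the family of seminorms defining $\Sc(\R^n)$ as a Nash manifold with the classical seminorms $\|f\|_{\al,\bt}=\sup_x|x^\al\partial^\bt f|$. The Nash seminorms are indexed by pairs (Nash tempered weight, constant-coefficient differential operator); since $1+|x|^2$ is Nash and tempered and generates all polynomial weights by powers, the two systems are cofinal. Granting (1), part (2) follows in two steps: first $\Sc(\R^n)$ is a classical NFS, and second, for general $(X,\cE)$, the map $f\mapsto (\psi_i f)_i$ realises $\Sc(X,\cE)$ as a closed subspace of $\bigoplus_i\Sc(U_i,\cE|_{U_i})$, each summand being a closed subspace of $\Sc(\R^{n_i})^{\oplus\rk\cE}$ cut out by vanishing conditions outside $U_i$; nuclearity and the \Fre property are preserved under finite direct sums and closed subspaces.

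Part (3) is where I expect the real work to be. Given a closed Nash submanifold $Z\subset X$, the key ingredient is the existence of a Nash tubular neighbourhood: a Nash open $U\supset Z$ with a Nash retraction $p:U\to Z$ isomorphic, as a Nash bundle, to the Nash normal bundle of $Z$ in $X$. To extend $s\in\Sc(Z,\cE|_Z)$ one forms $p^*s$ on $U$ and multiplies by a Nash cut-off function that equals $1$ on $Z$, is supported in $U$, and has tempered derivatives; that the result lies in $\Sc(X,\cE)$ is then a consequence of part (4) applied inside $U$ together with the fact that extension by zero of a compactly-normally-supported Schwartz section is Schwartz. Surjectivity of the restriction is immediate from this construction, and strictness of the restriction map follows from the open mapping theorem for \Fre spaces once one checks its continuity. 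The main technical obstacle is the semi-algebraic input: existence of Nash tubular neighbourhoods and of Nash cut-offs with controlled tempered behaviour, which is provided by the foundational Nash-geometry references.

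Finally, part (4) is a direct application of the Leibniz rule in local coordinates: every derivative of $\al f$ is a finite sum of products of derivatives of $\al$ (each bounded by a polynomial by temperedness) with derivatives of $f$ (each of rapid decay), and the pointwise product of a polynomially bounded function with a rapidly decreasing function is rapidly decreasing. The statement that this property characterises tempered functions is verified by testing $\al$ against compactly supported bump functions translated along a divergent sequence and comparing growth rates. Of the four parts, (3) is unambiguously the hardest and the one that genuinely uses the Nash-geometric structure rather than formal functional-analytic manipulations.
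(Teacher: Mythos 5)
This item of the paper is not proved there at all: it is a list of foundational facts quoted verbatim from the references, each with its own citation (\cite[Theorem 4.1.3]{AGSc}, \cite[Corollary 2.6.2]{AGRhamShap}, \cite[\S 1.5]{AGSc}, \cite[Proposition 4.2.1]{AGSc}). So there is no in-paper argument to compare against; what you have written is a reconstruction of the proofs in those references, and as such it is broadly faithful to how they actually go: (1) is a cofinality comparison of the Nash-differential-operator seminorms with the classical ones (using that Nash functions on $\R^n$ are polynomially bounded, by semialgebraicity); (2) reduces to $\R^n$ via a finite trivialising cover and a tempered partition of unity, with $\Sc(U_i,\cE|_{U_i})$ sitting as a closed (indeed complemented) subspace of the direct sum by Proposition \ref{pOpenSet}; (3) is the tubular-neighbourhood extension; (4) is the Leibniz rule.

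One genuine slip in (3): there is no such thing as a ``Nash cut-off function \ldots supported in $U$''. A Nash function is real-analytic and semialgebraic, so a nonzero one cannot vanish on a nonempty open set of a connected component; cut-offs in this theory are necessarily only \emph{smooth tempered} functions (this is exactly why Proposition \ref{pCosheaf} produces a tempered, not Nash, partition of unity). Relatedly, be explicit that $p^*s$ alone is not Schwartz on the tube (it is constant along the fibres of $p$, hence has no decay in the normal directions), so the Schwartz estimates come only from the product with a cut-off that is Schwartz in the fibre variable and tempered overall; with that correction your outline matches the argument of \cite[\S 1.5, \S 4.6]{AGSc}. The remaining parts are fine as sketched.
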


\begin{prop}[{\cite[ Theorem 5.4.3]{AGSc}}] \label{pOpenSet}
Let $U \subset X$  be a (semi-algebraic) open subset, then
$$\Sc(U,\cE) \cong \{\phi \in \Sc(X,\cE)| \quad \phi \text{ is 0 on } X
\setminus U \text{ with all derivatives} \}.$$
In particular, extension by zero defines a closed imbedding $\Sc(U,\cE) \into \Sc(X,\cE)$.
\end{prop}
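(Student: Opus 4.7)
The plan is to reduce to an affine model and then convert between Schwartz decay on $U$ and infinite-order vanishing on $X\setminus U$ via a \L ojasiewicz-type inequality for semi-algebraic sets. First, using \Cref{pCosheaf} together with a finite Nash atlas of $X$ by open subsets of Euclidean space on which $\cE$ trivializes, I would reduce the question to a local model where $X$ is an open (or closed) Nash subset of some $\R^N$ and $\cE$ is trivial. The whole statement is about the interface between $U$ and $X\setminus U$, so multiplying by a tempered partition of unity subordinate to a cover that separates a neighborhood of $X\setminus U$ from a fixed compact portion of $U$ further reduces matters to understanding extension by zero across $X\setminus U$.

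The key geometric ingredient is a Nash function $\rho\colon X \to [0,\infty)$ with $\rho^{-1}(0) = X\setminus U$, which exists by semi-algebraicity of $X\setminus U$. \L ojasiewicz's inequality then gives the two facts I need: $1/\rho$ is a tempered function on $U$ (so $\rho$ controls, from below, an actual distance to $X\setminus U$ raised to some positive power), and conversely every tempered function on $U$ and every Nash differential operator on $U$ can be written as an ambient tempered function, respectively an ambient Nash differential operator on $X$, divided by some fixed power of $\rho$.

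For the forward inclusion, given $\phi \in \Sc(U,\cE)$, applying the Schwartz seminorms on $U$ with the tempered function $\rho^{-N}$ and with any ambient Nash differential operator $D$ shows that $\rho^{-N} D\phi$ is bounded on $U$ for every $N$. Standard Taylor-remainder estimates then extend $\phi$ by zero to a $C^\infty$ section on $X$ whose full jet vanishes along $X\setminus U$, and the Schwartz seminorms on $X$ are dominated by those on $U$; this also shows continuity of extension by zero. For the reverse inclusion, given $\phi \in \Sc(X,\cE)$ vanishing to infinite order on $X\setminus U$, a general Schwartz seminorm $\sup_U|g\cdot D\phi|$ on $U$ is estimated, via the key ingredient, by a supremum of the form $\sup_X |\rho^{-L}\tilde g \tilde D\phi|$ for an ambient tempered $\tilde g$ and ambient Nash $\tilde D$; the infinite-order vanishing of $\phi$ on $X\setminus U$ combined with its membership in $\Sc(X,\cE)$ makes this finite.

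Finally, for the topological statement, the subspace of $\Sc(X,\cE)$ of sections vanishing to infinite order on $X\setminus U$ is closed, being the intersection of the kernels of the continuous jet-evaluation maps along $X\setminus U$. Extension by zero gives a continuous bijection from $\Sc(U,\cE)$ onto this closed subspace of the nuclear \Fre space $\Sc(X,\cE)$, and the open mapping theorem (cf.\ the discussion before \Cref{lem:FinGood}) promotes it to a topological isomorphism. The main obstacle is the \L ojasiewicz-type control step: proving that every tempered function and every Nash differential operator on $U$ is dominated by an ambient tempered object times a negative power of $\rho$. This is where semi-algebraic geometry does the real work, and it is the technical heart referenced in \cite[Theorem 5.4.3]{AGSc}.
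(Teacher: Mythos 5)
This proposition is quoted by the paper directly from \cite[Theorem 5.4.3]{AGSc} without proof, so there is no internal argument to compare against; your sketch has to be measured against the proof in the cited source, and in broad outline it follows the same route: reduction to an affine model by a finite cover and a tempered partition of unity, comparison of intrinsic Nash differential operators on $U$ with ambient ones on $X$ via a semi-algebraic function controlling the distance to $X\setminus U$, \L{}ojasiewicz-type inequalities to pass between Schwartz decay on $U$ and infinite-order vanishing on $X\setminus U$, and finally closedness of the flat subspace plus the open mapping theorem for \Fre spaces.

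One step fails as literally stated: in general there is no \emph{Nash} function $\rho\geq 0$ on $X$ with $\rho^{-1}(0)=X\setminus U$. Nash functions are real analytic, so such a $\rho$ would vanish identically on any connected component of $X$ in which $X\setminus U$ has nonempty interior; already for $X=\R$ and $U=(0,\infty)$ no such $\rho$ exists. The repair, which is what \cite{AGSc} actually does, is to write the open semi-algebraic set $U$ as a finite union of basic open sets $\{p_1>0,\dots,p_k>0\}$ and, after a further tempered partition of unity, to use the product $p=\prod p_j$ as the controlling function: it is Nash, positive on the basic piece, and vanishes on that piece's boundary, which is all the \L{}ojasiewicz estimates require (its zero set need not be exactly $X\setminus U$). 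Relatedly, your claim that every tempered function and every Nash differential operator on $U$ literally \emph{equals} an ambient one divided by a fixed power of $\rho$ is too strong; the correct and sufficient statement is domination, namely that after multiplication by a suitable power of $p$ such objects are bounded by finitely many ambient ones. With these corrections your two inclusions and the final topological step are sound and consistent with the cited proof.
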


\begin{prop}[{Partition of unity,  \cite[\S 5]{AGSc}}] \label{pCosheaf} Let $X = \bigcup_{i=1}^n U_i$ be a finite open
cover of $X$.  Then there exists a tempered partition of unity
\DimaC{, i.e. tempered functions $\{\lambda_i\}_{i=1}^n$ on $X$ such that $1 =\sum
 _{i=1}^n \lambda_i$ and $\Supp(\lambda_i)\subset U_i$.

 Note that \Cref{pOpenSet} implies that
 for any Nash bundle $\cE$ over $X$} and any Schwartz section $f\in \Sc(X,\cE)$, the section $\lambda_i f$ is a Schwartz section of $\cE$ on $U_i$ (extended by zero to $X$).
\end{prop}

\DimaC{Let $Z$ be a closed semi-algebraic subset of  $X$.  Denote
$$\Sc_Z(X,\cE):=\Sc(X,\cE)/\Sc(X-Z,\cE).$$
Here we identify $\Sc(X-Z,\cE)$ with a closed subspace of $\Sc(X,\cE)$
using the description of Schwartz functions on an open set (Proposition \ref{pOpenSet}).
Note that this notation differs from the notation in \cite{AGSc,AGRhamShap,AGST}, but matches the notation common in sheaf theory.}

\par To obtain a feeling for the objects $\Sc_Z(X, \cE)$ let us
 consider the case of the trivial bundle and $Z=\{pt\}$ a point.
\change{Then $\Sc_{\{pt\}}(X)= \Sc(X) / \Sc(X-\{pt\})$ and Proposition \ref {pOpenSet} implies that
there is a well defined
injective map (the Taylor series map at the point $ pt$) into the ring of power series in $n=\dim X$ variables:

$$ \Sc_{\{pt\}} (X) \to \bC[[x_1, \ldots, x_n]]\,  .$$}
The contents of Borel's Lemma is that this map is surjective.  Note that the formal power  series
have a natural structure as projective limit. The generalization of Borel's lemma now reads as:

\DimaC{\begin{lem}[{See \cite[Lemmas B.0.8 and B.0.9]{AGST}}] \label{lem:Borel}
Let $Z \subset X$ be a closed Nash submanifold.

Then $\Sc_Z(X,\cE)$ has a canonical countable decreasing filtration by closed subspaces
$\Sc_Z(X,\cE)^i$ satisfying:
\begin{enumerate}
\item
$\bigcap \Sc_Z(X,\cE)^i=\{0\}$.

\item $gr_i(\Sc_Z(X,\cE)) \cong \Sc(Z,\Sym^i(CN_Z^X)\otimes \cE)$, where $CN_Z^X= (TX|_Z/ TZ)^*$
denotes the conormal bundle to $Z$ in $X$.

\item The natural map  $$\Sc_Z(X,\cE) \to \lim_\ot(\Sc_Z(X,\cE)/\Sc_Z(X,\cE)^i)$$
is an isomorphism.
\end{enumerate}
\end{lem}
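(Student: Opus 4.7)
The plan is to construct the filtration explicitly by order of vanishing on $Z$, and then to establish a Borel-type extension statement in the Schwartz category. First I would reduce to the case where $Z$ is closed in $X$, which is harmless: replacing $X$ by $X\setminus(\overline{Z}\setminus Z)$ does not affect $\Sc_X(Z,\cE)$ and makes $Z$ closed, so that $\Sc_X(Z,\cE)=\Sc(X,\cE)/\Sc(X\setminus Z,\cE)$. Then, via a Nash tubular neighborhood together with the partition of unity of \Cref{pCosheaf}, I would further replace $X$ by an open Nash neighborhood of $Z$ in the normal bundle, so that $Z$ appears as the zero section of a Nash vector bundle over itself.

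Next, I would define $\Sc_X(Z,\cE)^i$ as the image in $\Sc_X(Z,\cE)$ of the closed subspace of $\Sc(X,\cE)$ consisting of sections whose $(i-1)$-jet along $Z$ vanishes; these images are closed because they are quotients of kernels of continuous jet maps. Taking $i$-th order Taylor coefficients along $Z$ gives a continuous map
\[
\Sc_X(Z,\cE)^i \longrightarrow \Sc(Z,\Sym^i(CN_Z^X)\otimes \cE)
\]
whose kernel is exactly $\Sc_X(Z,\cE)^{i+1}$; surjectivity, which gives (2), follows from multiplying a given Schwartz section on $Z$ by a monomial in normal coordinates times a Nash cutoff and extending by zero. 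Part (1) follows from \Cref{pOpenSet}: a Schwartz section vanishing to infinite order on $Z$ together with all its derivatives extends by zero to a Schwartz section on $X\setminus Z$ and therefore represents $0$ in $\Sc_X(Z,\cE)$.

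Injectivity of the map in (3) is merely a restatement of (1). For surjectivity one needs the full Borel-type lemma: given a compatible system $(f_i)$ of truncated Taylor jets along $Z$ with Schwartz coefficients $f_\alpha$, one must exhibit a Schwartz section of $\cE$ on $X$ realizing this formal series. I would construct this by the classical method: in a local Nash trivialization with normal coordinates $y$, set
\[
F(x,y)=\sum_\alpha \phi\!\left(\frac{y}{\eps_{|\alpha|}}\right)\frac{y^\alpha}{\alpha!}\,f_\alpha(x),
\]
where $\phi$ is a fixed Nash cutoff identically $1$ near $0$ and compactly supported, and where the positive parameters $\eps_j\to 0$ are chosen inductively so that the tail from $|\alpha|\ge N$ contributes less than $2^{-N}$ in each of the first $N$ Schwartz seminorms. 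The local constructions are then glued by \Cref{pCosheaf}.

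The main obstacle is precisely this Borel construction. The delicate point is the joint choice of the rescaling parameters $\eps_j$: they must decay quickly enough that the sum converges in every Schwartz seminorm jointly in $x$ and $y$, yet the Taylor expansion of $F$ along $y=0$ must still reproduce exactly the prescribed $f_\alpha$. The latter property is guaranteed because $\phi\equiv 1$ on a neighborhood of the origin, so each Taylor coefficient at $y=0$ receives a contribution from only one term of the series. Once this convergence estimate is in place, the rest of the argument is routine bookkeeping with the Nash partition of unity and \Cref{pOpenSet}.
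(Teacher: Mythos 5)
Your argument is correct and follows essentially the same route as the proof in the cited source \cite[Lemmas B.0.8 and B.0.9]{AGST} (the paper itself only quotes the result): the jet-vanishing filtration, identification of the graded pieces via Taylor coefficients in a Nash tubular neighborhood of $Z$, and the classical Borel construction with rescaled cutoffs to get surjectivity onto the inverse limit. In a full write-up the only points to make explicit are the existence of Nash tubular neighborhoods and the bookkeeping for how the tempered partition of unity of \Cref{pCosheaf} acts on the formal Taylor series when gluing the local Borel extensions.
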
}

\change{The following universal finiteness bound is a key technical tool for this paper:}

\begin{theorem}[{\cite[Theorem D]{AGM}}]\label{thm:Fin}
Let a real algebraic group $H$ act on a real algebraic manifold $X$ with
finitely many orbits. Let $\fh$ be the Lie algebra of $H$. Let $\cE$ be an algebraic $H$-equivariant
bundle on $X$. Then, there exists $C\in \mathbb{N}$ such that for every
character $\chi$ of $\fh$ we have
$$\dim \Sc^*(X,\cE)^{(\fh,\chi)}<C.$$
\end{theorem}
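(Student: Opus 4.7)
The plan is to induct on the number of $H$-orbits in $X$. Fix a closed orbit $Z \subset X$ and set $U := X \setminus Z$; since $Z$ is closed one has $\Sc_X(Z,\cE) = \Sc(X,\cE)/\Sc(U,\cE)$, so dualizing the short exact sequence
$$0 \to \Sc(U,\cE) \to \Sc(X,\cE) \to \Sc_X(Z,\cE) \to 0$$
and applying the left exact functor of $(\fh,\chi)$-invariants yields
$$\dim \Sc^*(X,\cE)^{(\fh,\chi)} \leq \dim \Sc_X(Z,\cE)^{*,(\fh,\chi)} + \dim \Sc^*(U,\cE)^{(\fh,\chi)}.$$
Since $U$ has strictly fewer $H$-orbits than $X$, the induction hypothesis bounds the second summand by a constant independent of $\chi$. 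The task therefore reduces to producing a $\chi$-uniform bound on the first summand for each closed orbit $Z$.

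I would next dualize the canonical filtration of \Cref{lem:Borel}, presenting $\Sc_X(Z,\cE)^*$ as the increasing union of duals of the finite-step quotients $\Sc_X(Z,\cE)/\Sc_X(Z,\cE)^i$, with graded pieces $\Sc^*(Z, \Sym^i(CN_Z^X) \otimes \cE|_Z)$. Writing $Z \cong H/H_0$ with $\fh_0 := \Lie(H_0)$, a Frobenius reciprocity (Shapiro-type) argument identifies $(\fh,\chi)$-invariants in each graded piece with $(\fh_0,\chi_i)$-invariants in the finite-dimensional fiber $(\Sym^i(CN_Z^X|_x) \otimes \cE|_x)^*$, where $\chi_i$ differs from $\chi|_{\fh_0}$ by the modular character of $H_0$ and a correction depending linearly on $i$ through the $\fh_0$-action on the conormal direction. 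For any single $\chi$ this already forces all but finitely many $i$ to contribute zero, but the set of contributing $i$ and the totals depend on $\chi$.

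The main obstacle is promoting this pointwise finiteness to a bound uniform in $\chi$: one must simultaneously control both the set of contributing $i$ and the fiber dimensions across all characters at once, a problem that is particularly delicate when $\fh_0$ acts trivially on the conormal direction so that the character $\chi_i$ is constant in $i$. For this I would invoke the theorem of Bernstein and Kashiwara on equivariant $D$-modules. Packaging the infinite-jet data along $Z$ together with $\cE^*$ into a finitely generated holonomic algebraic $D_X$-module carrying a compatible $\fh$-action, one sees that every $(\fh,\chi)$-invariant tempered distribution supported on $Z$ produces, via the de Rham functor over $\R$, an $(\fh,\chi)$-equivariant solution of that $D$-module. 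The uniform finiteness of $\fh$-equivariant solutions of holonomic $D$-modules on an algebraic variety with finitely many $H$-orbits then supplies a constant, independent of $\chi$, bounding the solution space. Feeding this bound back into the orbit induction closes the argument.
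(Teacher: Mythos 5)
This theorem is not proved in the paper at all: it is quoted verbatim from \cite[Theorem D]{AGM} and used as a black box, the introduction recording only that it is established there ``using a theorem by Bernstein and Kashiwara from the theory of D-modules.'' So the only question is whether your sketch would stand on its own, and as written it does not. Your final paragraph names the right tool, but you invoke it in the form ``the uniform finiteness of $\fh$-equivariant solutions of holonomic $D$-modules on a variety with finitely many $H$-orbits supplies a constant independent of $\chi$'' --- which is essentially a restatement of the theorem to be proved. The content that must be supplied is: (a) the $D_X$-module $M_\chi$ obtained from $D_X\otimes\cE^*$ by quotienting by the left ideal generated by $\xi-\chi(\xi)$, $\xi\in\fh$, has characteristic variety contained in the union of the closures of the conormal bundles of the $H$-orbits, and this union is Lagrangian \emph{precisely because} there are finitely many orbits --- this is where the hypothesis enters and it is the heart of the matter; (b) every $(\fh,\chi)$-equivariant tempered distributional section is a solution of $M_\chi$, so Kashiwara's bound (the multiplicity of the characteristic cycle) applies; (c) that multiplicity is independent of $\chi$ because the principal symbols of the generators $\xi-\chi(\xi)$ do not see $\chi$. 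None of (a)--(c) appears in your write-up, and once they are in place they apply directly to $X$, making your orbit induction and jet filtration superfluous.

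There is also a false intermediate claim: that ``for any single $\chi$ this already forces all but finitely many $i$ to contribute zero.'' The Frobenius/weight argument only excludes large $i$ when the stabilizer $\fh_0$ acts on the conormal fiber with weights whose real parts escape to infinity under $\Sym^i$; in exactly the delicate case you flag (trivial, nilpotent, or purely imaginary action on $CN_Z^X$) the character being matched does not drift with $i$, so nothing is excluded, and moreover $\dim\bigl(\Sym^i(CN_Z^X)\otimes\cE^*\bigr)^{(\fh_0,\chi_i)}$ can grow with $i$. Hence the graded analysis cannot by itself produce any bound, pointwise or uniform; the global $D$-module input is indispensable, and it is the part you have left as a black box. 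In the context of this paper the correct course is simply to cite \cite[Theorem D]{AGM}.
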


We now recall some facts from de-Rham theory with Schwartz coefficients. For more details see \cite[\S 3]{AGRhamShap}.

\begin{notation}\label{twist}[cf. {\cite[Definition 3.2.1]{AGRhamShap}}]
Let $\pi:F\to X$ be (Nash) locally trivial fibration of Nash manifolds, and $\cE$ a (Nash)  vector bundle on $X$. We denote by $TDR_{\Sc}^{\cE}(F \to X)$ the twisted relative de-Rham complex with coefficients in Schwartz sections of $\cE$, i.e. the complex whose entries are $\Sc(F,\pi^*(\cE)\otimes \Omega^k_{F/X}\otimes Orient_{F/X})$, where  $\Omega^k_{F/X}$ is the bundle of relative differential forms and $Orient_{F/X}$ is the relative orientation bundle.

If $X$ is a point and $E$ is one-dimensional we use  $TDR_{\Sc}(F)$ instead of $TDR_{\Sc}^{\cE}(F \to X)$.
\end{notation}

The following version of the de-Rham theorem follows from \cite[Theorem 3.2.2]{AGRhamShap} \change{and Poincare} duality.

\begin{theorem}\label{thm:RelDR}
In the notations above, the cohomologies of  $TDR_{\Sc}(F\to X)$ are $\Sc(X,\cE \otimes \cH^i_{F\to X})$ where $\cH^i_{F\to X}$ are the bundles on $X$ such that for $x \in X$ the fiber $\cH^i_{F\to X}|_x$ is canonically isomorphic to \RamiC{the singular homologies} $\oH_{\dim(\pi^{-1}(x))-i}(\pi^{-1}(x)\RamiC{,\R}).$
\end{theorem}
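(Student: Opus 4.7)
The plan is to derive this result by combining the untwisted version of the relative Schwartz de-Rham theorem from \cite[Theorem 3.2.2]{AGRhamShap} with fiberwise Poincar\'e duality, and then transporting the coefficients in $\cE$ through the pullback/projection formula.

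First, I would invoke \cite[Theorem 3.2.2]{AGRhamShap} applied to the relative de-Rham complex with coefficients in $\pi^*\cE$ but \emph{without} the orientation twist. Since Schwartz sections decay rapidly at infinity, de-Rham cohomology with Schwartz coefficients on a Nash manifold computes \emph{compactly supported} cohomology; hence the cited theorem produces a natural isomorphism between the cohomology of this untwisted relative complex and $\Sc(X, \cE \otimes \cG^i)$, where $\cG^i$ is the bundle on $X$ with fiber $\cG^i|_x = \oH^i_c(\pi^{-1}(x))$. The bundle structure on $\cG^i$ comes from the fact that $\pi$ is a Nash locally trivial fibration, so the fibers are locally constant in $x$ up to Nash isomorphism.

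Next, I would incorporate the relative orientation twist $Orient_{F/X}$. Fiberwise, restricting to $\pi^{-1}(x)$ turns $Orient_{F/X}$ into the orientation line bundle $Or_{\pi^{-1}(x)}$ of the fiber. Therefore the cohomology of $TDR_{\Sc}^{\cE}(F\to X)$ becomes $\Sc(X, \cE \otimes \widetilde{\cH}^i)$, where $\widetilde{\cH}^i|_x \cong \oH^i_c(\pi^{-1}(x); Or_{\pi^{-1}(x)})$.

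Finally, I would apply classical Poincar\'e duality on each fiber: for any smooth $n$-manifold $M$ (oriented or not) one has a canonical isomorphism $\oH^i_c(M; Or_M) \cong \oH_{n-i}(M)$. Applied to $M = \pi^{-1}(x)$ with $n = \dim(\pi^{-1}(x))$, this gives precisely the identification $\widetilde{\cH}^i|_x \cong \oH_{\dim(\pi^{-1}(x))-i}(\pi^{-1}(x))$ claimed in the statement, so we may rename $\widetilde{\cH}^i$ as $\cH^i$.

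I expect the main obstacle to be bookkeeping rather than conceptual: one has to verify that the fiberwise Poincar\'e duality isomorphism is natural enough to promote to an isomorphism of Nash bundles over $X$ (so that taking Schwartz sections is well-defined), and that the orientation twist and coefficient bundle $\cE$ interact as expected. The Nash local triviality of $\pi$ handles the first point, and $\cE$ is only pulled back from $X$, so it passes through both the de-Rham isomorphism and Poincar\'e duality without difficulty.
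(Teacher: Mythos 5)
Your proposal is correct and coincides with the paper's own (one-line) justification: the paper simply asserts that the theorem ``follows from \cite[Theorem 3.2.2]{AGRhamShap} and Poincar\'e duality,'' which is exactly the combination you spell out, including the correct handling of the orientation twist via $\oH^i_c(M;Or_M)\cong \oH_{n-i}(M)$ and the harmless passage of the pulled-back coefficient bundle $\cE$ through both isomorphisms.
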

\begin{theorem}[see {\cite[Theorem 2.4.15]{AGRhamShap}}]\label{thm:HomFin}
Let $M$ be a Nash manifold. Then all the homology groups
$\oH_i(M)$
are finite dimensional.
\end{theorem}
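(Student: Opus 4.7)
The plan is to reduce the statement to a purely topological finiteness result about semi-algebraic sets. A Nash manifold $M$ is by definition a smooth semi-algebraic submanifold of some affine space $\bR^N$, so it suffices to show that every semi-algebraic set has finite-dimensional singular homology in each degree.

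The main tool I would invoke is the semi-algebraic triangulation theorem (see \cite{BCR}): every semi-algebraic subset $M \subset \bR^N$ is semi-algebraically homeomorphic to the realization of a finite simplicial complex with a distinguished subcomplex removed, i.e.\ to an open sub-polyhedron of a finite simplicial complex. Equivalently, one has a finite decomposition of $M$ into Nash cells (each diffeomorphic to an open simplex), where the number of cells of each dimension is finite and the total dimension is bounded by $\dim M$. From this one deduces that $M$ is homotopy equivalent to a finite CW complex, for example by deformation-retracting $M$ onto the union of cells whose closure lies in $M$, or by appealing to the standard fact that any semi-algebraic set has the homotopy type of a finite CW complex.

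Once this topological model is in hand the conclusion is immediate: the cellular chain complex of a finite CW complex has finite-dimensional chain groups, hence its homology is finite-dimensional in each degree and vanishes above the top dimension. Since singular homology is homotopy invariant, the same holds for $\oH_i(M)$.

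The main subtlety is the possible non-compactness of $M$: the ambient triangulation from \cite{BCR} produces a compact polyhedron in which $M$ sits as an open sub-polyhedron, not as the geometric realization of a closed subcomplex, so one cannot directly read off a finite CW model. This is overcome by the deformation retraction onto a finite subcomplex obtained by pushing the missing open faces inward along the simplicial structure, a construction that is standard in semi-algebraic topology. No further input is needed, and in particular this proof is independent of the Schwartz-analytic machinery of \Cref{thm:RelDR} and the other preliminaries.
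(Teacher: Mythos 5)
The paper itself offers no proof of this statement: it is imported wholesale from \cite[Theorem 2.4.15]{AGRhamShap} and used as a black box, so there is no internal argument to compare yours against. That said, your proof is the standard topological one and is correct in outline: semi-algebraic triangulation reduces the claim to the finiteness of the homology of a finite union of open simplices of a finite simplicial complex, and you correctly isolate the only delicate point, namely that a non-compact $M$ appears as a union of \emph{open} simplices rather than as a closed subcomplex. To make that step precise rather than an appeal to folklore, use that a Nash manifold is locally closed in $\bR^N$ (by \cite{AGSc,Shi} it may even be taken to be a closed Nash submanifold, and after a semi-algebraic homeomorphism of $\bR^N$ onto a bounded set one may assume $\overline{M}$ compact): triangulate the compact pair $(\overline{M},\ \overline{M}\setminus M)$ so that $\overline{M}=|K|$ and $\overline{M}\setminus M=|L|$ with $L\subset K$ a full subcomplex after one barycentric subdivision; the classical supplement lemma (e.g.\ Munkres, \emph{Elements of Algebraic Topology}, \S 70) then states that $|K|\setminus|L|$ deformation retracts onto the full subcomplex spanned by the vertices not in $L$, which is exactly the finite simplicial model you need. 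One further sanity check worth recording: the homology intended here is ordinary singular homology (in \Cref{thm:RelDR} it arises from the compactly supported de Rham cohomology of the fibers via Poincar\'e duality), so homotopy invariance applies and the finite CW model finishes the proof; had Borel--Moore homology been meant, the finite cell decomposition would give finiteness even more directly. Your closing observation that the argument is independent of the Schwartz-analytic machinery of the paper is accurate.
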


\begin{lem}[see {\cite[Lemma A.0.7]{AGS2}}]\label{lem:DRK}
Let $H$ be a Nash group. Then the de-Rham complex $DR_{\Sc}(H)$ is isomorphic to the Koszul complex that computes the homologies of $\Sc(H)$.
\end{lem}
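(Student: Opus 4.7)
The plan is to work entirely in the left-invariant trivialization on $H$, and then to invoke Poincar\'e duality at the fibrewise linear-algebra level. First, choose a basis $e_1,\dots,e_n$ of $\fh$ (with $n=\dim H$) and extend it to the frame $\widetilde e_1,\dots,\widetilde e_n$ of left-invariant Nash vector fields on $H$. This frame trivializes $TH$ as a Nash vector bundle, its dual coframe $e^1,\dots,e^n$ trivializes $T^*H$, and the left-invariant top form simultaneously trivializes the orientation bundle $Orient_H$. At the level of Schwartz sections one obtains isomorphisms of NF-spaces
\begin{equation*}
\Sc(H,\Omega^k_H\otimes Orient_H)\;\cong\;\Sc(H)\otimes\Lambda^k\fh^*,
\end{equation*}
so that $DR_\Sc(H)$ is identified, as a graded NF-space, with $\Sc(H)\otimes\Lambda^\bullet\fh^*$ equipped with some differential. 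All of these identifications stay inside the Schwartz category by \Cref{thm:ScBasic}.

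Next, I would compute the de-Rham differential under this trivialization. Using the Maurer-Cartan equation together with $\langle\widetilde e_i,e^j\rangle=\delta_i^j$, a direct calculation shows
\begin{equation*}
d(f\otimes\xi)=\sum_{j=1}^{n}(\widetilde e_j f)\otimes e^j\wedge\xi\;+\;f\otimes d_{\fh}\xi,\qquad f\in\Sc(H),\;\xi\in\Lambda^\bullet\fh^*,
\end{equation*}
where $d_\fh$ is the Chevalley-Eilenberg cochain differential on $\Lambda^\bullet\fh^*$ determined by the Lie bracket on $\fh$. This is precisely the Chevalley-Eilenberg cochain complex of $\fh$ with coefficients in $\Sc(H)$, where $\fh$ acts on $\Sc(H)$ through the left-invariant vector fields $\widetilde e_j$.

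Finally, I would apply Poincar\'e duality at the linear-algebra level. A choice of generator $\omega\in\Lambda^n\fh$ induces isomorphisms $\phi_k:\Lambda^k\fh^*\IsoTo\Lambda^{n-k}\fh$ by $\phi_k(\xi)=\iota_\xi\omega$. Tensoring with $\Sc(H)$ and reversing the grading via $k\mapsto n-k$, a standard sign-tracking computation shows that $\id\otimes\phi_\bullet$ intertwines the Chevalley-Eilenberg cochain differential above with the Chevalley-Eilenberg chain differential on $\Sc(H)\otimes\Lambda^{n-\bullet}\fh$; the latter is by definition the Koszul complex computing $\oH_*(\fh,\Sc(H))$. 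This yields the desired isomorphism of complexes.

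The main obstacle is not mathematical content but bookkeeping: confirming that the orientation-twist convention built into the definition of $DR_\Sc$ (see \Cref{twist}) is precisely the one that enables the Poincar\'e-duality identification $\Lambda^k\fh^*\cong\Lambda^{n-k}\fh$ used in the third step, and keeping all signs consistent across the three steps. The Nash/Schwartz setting adds no new analytic difficulty, since each operation used (multiplication by tempered functions, differentiation by Nash vector fields, and contraction with constant polyvectors on $\fh$) is internal to $\Sc(H)$ by \Cref{thm:ScBasic}.
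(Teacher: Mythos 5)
There is a genuine gap in Step~3, and it is not mere bookkeeping as you suggest at the end. The assertion that $\mathrm{id}\otimes\phi_\bullet$ (with $\phi_k(\xi)=\iota_\xi\omega$ for a fixed $\omega\in\Lambda^n\fh$) intertwines the Chevalley--Eilenberg \emph{cochain} differential on $\Sc(H)\otimes\Lambda^\bullet\fh^*$ with the Chevalley--Eilenberg \emph{chain} differential on $\Sc(H)\otimes\Lambda^{n-\bullet}\fh$ is simply false when $\fh$ is not unimodular. Take $\fh=\Span\{e_1,e_2\}$ with $[e_1,e_2]=e_2$, $\omega=e_1\wedge e_2$, and $f\in\Sc(H)$. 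Then $\phi_1(d(f\otimes 1))=\widetilde e_1f\cdot e_2-\widetilde e_2 f\cdot e_1$, while the Koszul differential gives $\partial(e_1\wedge e_2\otimes f)=e_2\otimes\widetilde e_1f-e_1\otimes\widetilde e_2f-e_2\otimes f$; the two differ by the extra term $-e_2\otimes f$, coming from the bracket term $[e_1,e_2]\otimes f$. What Poincar\'e duality actually gives at the chain level is an isomorphism of the CE cochain complex of $\Sc(H)$ with the Koszul complex of $\Sc(H)\otimes\Lambda^n\fh$, where $\Lambda^n\fh$ carries the modular character $\operatorname{tr}\operatorname{ad}$; this twist does not disappear for general Nash groups $H$.

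To close the gap you need one more substantive step: observe that multiplication by the Nash character $\alpha=\det\operatorname{Ad}_\fh\colon H\to\bR^\times$ furnishes an $\fh$-module isomorphism $\Sc(H)\otimes\Lambda^n\fh\IsoTo\Sc(H)$, because $\widetilde e_i\alpha=\operatorname{tr}(\operatorname{ad}e_i)\,\alpha$; and that $\alpha$ and $\alpha^{-1}$ are Nash, hence tempered, so by \Cref{thm:ScBasic}\eqref{p:Tempered} multiplication by $\alpha$ really is an automorphism of $\Sc(H)$. This \emph{is} where the Schwartz/Nash hypothesis earns its keep, contrary to your closing remark. Alternatively, you would have to verify that the orientation twist $Orient_H$ in \Cref{twist} is being used in a way that already incorporates $\Lambda^n TH$ rather than the usual $\mathbb{Z}/2$-orientation cover; you flag this check but classify it as bookkeeping, whereas it is exactly the missing mathematical content. (Since the paper itself does not prove this lemma but cites \cite[Lemma A.0.7]{AGS2}, I cannot compare routes directly, but the approach you outline is the natural one and is almost certainly the one taken there, with the modular correction handled explicitly.)
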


\subsection{Inverse limit and Lie algebra homologies}
In this paper we only consider inverse limits sequences $\{V_n\}$ parameterized by non-negative integers, with maps $\varphi_n:V_{n+1}\to V_n$.
The inverse limit functor is left-exact on any abelian category that admits countable direct products,  but in general not exact. We will now define a well-known class of acyclic sequences, called Mittag-Leffler sequences.

\begin{definition}
Let $\dots\to V_{j+1}\to V_j\to \dots \to V_0$ be a sequence of objects of some abelian category. Let $\varphi_{ij}$ denote the map $V_i\to V_j$ obtained by composition.
We say that this is an \emph{ML sequence} if for any fixed $j$, the images of $\varphi_{ij}$ stabilize, i.e. there exists $i_0$ such that for any $i>i_0$, we have $\Im \varphi_{ij}=\Im \varphi_{i_0j}$.
\end{definition}

Clearly, if all the maps $\varphi_n$ are epimorphisms then $\{V_n\}$ is an  ML sequence. Another important example of ML sequence is any sequence in the category of finite-dimensional vector spaces.

We will need the following lemma.
\begin{lemma}[{\cite[Chapter 0, Proposition 13.2.3]{Gro}}]\label{lem:Gro}
Let $$\dots\to K^{\bullet} _{j+1}\to K^{\bullet} _{j}\to \dots \to K^{\bullet} _{0}$$ be a  sequence of complexes of abelian groups. Suppose that for any $n$, the sequence $$\dots\to K^{n} _{j+1}\to K^{n} _{j}\to \dots \to K^{n} _{0}$$ is ML, and that for some $p$, the sequence of cohomologies
$$\dots\to \oH^{p-1} _{j+1}\to \oH^{p-1} _{j}\to \dots \to \oH^{p-1} _{0}$$ is ML. Then the natural map
\change{ $$\oH^{p}(\lim_{\ot}K_j^\bullet)\to \lim_{\ot}\oH^{p}(K_j^\bullet).$$}
is an isomorphism. \end{lemma}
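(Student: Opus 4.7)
The plan is to reduce the statement to two classical facts of homological algebra: the vanishing of $\lim^1$ on Mittag--Leffler (ML) systems, and the six-term exact sequence relating $\lim_\ot$ and $\lim^1_\ot$ for a short exact sequence of inverse systems of abelian groups. Introduce the systems of cocycles $Z^n_j:=\ker(d\colon K^n_j\to K^{n+1}_j)$, coboundaries $B^n_j:=\im(d\colon K^{n-1}_j\to K^n_j)$, and cohomology $H^n_j:=Z^n_j/B^n_j$; these sit in canonical short exact sequences of inverse systems
$$0\to Z^n_\bullet\to K^n_\bullet\to B^{n+1}_\bullet\to 0\qquad\text{and}\qquad 0\to B^n_\bullet\to Z^n_\bullet\to H^n_\bullet\to 0.$$

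First I would verify that $B^p_\bullet$ and $B^{p-1}_\bullet$ are ML. Since $B^{n+1}_j=d(K^n_j)$, the image of the transition map $B^{n+1}_{j'}\to B^{n+1}_j$ equals $d\bigl(\im(K^n_{j'}\to K^n_j)\bigr)$, which stabilizes in $j'$ by the ML hypothesis on $K^n_\bullet$. Next I would deduce that $Z^{p-1}_\bullet$ is ML from the standard fact that an extension of two ML systems is ML: applied to the second short exact sequence in degree $p-1$, this uses $B^{p-1}_\bullet$ ML (just verified) and $H^{p-1}_\bullet$ ML (by hypothesis). This is the one place where the assumption on $\oH^{p-1}$ enters.

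With $K^{p-1}_\bullet$ and $Z^{p-1}_\bullet$ both ML, the six-term exact sequence applied to the first short exact sequence in degree $p-1$ yields both the surjectivity $\lim_\ot K^{p-1}_j\twoheadrightarrow\lim_\ot B^p_j$ and the vanishing $\lim^1_\ot B^p_j=0$. The surjectivity, combined with the inclusion $\lim_\ot B^p_j\hookrightarrow\lim_\ot K^p_j$ (left-exactness of $\lim_\ot$), identifies
$$B^p\bigl(\lim_\ot K^\bullet_j\bigr)=\im\bigl(\lim_\ot K^{p-1}_j\to\lim_\ot K^p_j\bigr)=\lim_\ot B^p_j,$$
while left-exactness gives the trivial identification $Z^p\bigl(\lim_\ot K^\bullet_j\bigr)=\lim_\ot Z^p_j$. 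Hence $\oH^p\bigl(\lim_\ot K^\bullet_j\bigr)=\lim_\ot Z^p_j\,/\,\lim_\ot B^p_j$.

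Finally, applying the six-term exact sequence to the second short exact sequence in degree $p$ and invoking the already established $\lim^1_\ot B^p_j=0$ produces the short exact sequence
$$0\to\lim_\ot B^p_j\to\lim_\ot Z^p_j\to\lim_\ot H^p_j\to 0,$$
so $\lim_\ot Z^p_j/\lim_\ot B^p_j\cong\lim_\ot H^p_j$, which combined with the preceding identification gives the asserted isomorphism. The main technical point in this plan is the closure of ML under extensions, but this is a routine diagram chase using the stabilization-of-images definition; together with the classical $\lim^1$-vanishing for ML systems, both ingredients are entirely standard.
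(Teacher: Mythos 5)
The paper does not prove this lemma; it simply cites it as Proposition 13.2.3 of Chapter~0 of EGA~III. Your argument is correct and is essentially the standard proof of that result: pass to the short exact sequences of inverse systems $0\to Z^n_\bullet\to K^n_\bullet\to B^{n+1}_\bullet\to 0$ and $0\to B^n_\bullet\to Z^n_\bullet\to H^n_\bullet\to 0$, propagate the ML property (quotients of ML are ML via $d(\im(K^n_{j'}\to K^n_j))$, extensions of ML by ML are ML), and then exploit the vanishing of $\lim^1$ on ML systems together with the six-term $\lim/\lim^1$ sequence and the left-exactness of $\lim$ to identify $\oH^p(\lim_\ot K^\bullet_j)=\lim_\ot Z^p_j/\lim_\ot B^p_j\cong\lim_\ot H^p_j$. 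The map you construct agrees with the natural comparison map, since the quotient $\lim_\ot Z^p_j\to\lim_\ot H^p_j$ is precisely the inverse limit of the coordinatewise passage to cohomology.
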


Applying this to the Koszul complexes of $\fh$-modules we obtain the following corollary.

\begin{cor}\label{lem:LimHom}
Let $\dots\onto V_{j+1}\onto V_j\onto\dots \onto V_0 $ be a sequence of $\fh$-modules and epimorphisms between them. Suppose that for some $p$ and any $j$, $\oH_p(\fh,V_j)$ is finite-dimensional. Then the natural map $$\oH_{p-1}(\fh,\lim_{\ot}V_j)\to \lim_{\ot}\oH_{p-1}(\fh,V_j)$$
is an isomorphism.
\end{cor}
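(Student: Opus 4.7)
The plan is to apply \Cref{lem:Gro} to the sequence of Koszul complexes $K_j^\bullet$ that compute the Lie algebra homology $\oH_\bullet(\fh, V_j)$. Since $\fh$ is finite-dimensional, the $n$-th term $\Lambda^n\fh \otimes V_j$ is just a finite direct sum of copies of $V_j$, which means the functor $V\mapsto \Lambda^n\fh\otimes V$ commutes with inverse limits; in particular $\lim_{\ot} K_j^\bullet$ is canonically identified with the Koszul complex of $\lim_{\ot} V_j$. I would re-index cohomologically by setting $K_j^{-n} := \Lambda^n\fh\otimes V_j$ so that $\oH^{-n}(K_j^\bullet) = \oH_n(\fh, V_j)$, and then apply \Cref{lem:Gro} with the lemma's index $p$ taken to be $-(p-1) = 1-p$.

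To feed the lemma, two ML conditions must be checked. First, for each fixed $n$, the sequence $\{K_j^{-n}\}_j = \{\Lambda^n\fh\otimes V_j\}_j$ is ML: by hypothesis every transition map $V_{j+1}\onto V_j$ is surjective, so the induced maps on the Koszul components are surjective as well, and any inverse system with surjective transition maps is automatically Mittag-Leffler. Second, the sequence of cohomologies at index $(1-p)-1 = -p$, namely $\{\oH_p(\fh, V_j)\}_j$, must be ML. This is exactly where the finite-dimensionality assumption is used: an inverse sequence of finite-dimensional vector spaces is trivially ML, because the descending chain of images in any fixed $\oH_p(\fh, V_{j_0})$ stabilizes for dimension reasons.

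With both hypotheses verified, \Cref{lem:Gro} yields an isomorphism
$$\oH^{1-p}\bigl(\lim_{\ot} K_j^\bullet\bigr) \IsoTo \lim_{\ot}\oH^{1-p}(K_j^\bullet),$$
which, under the re-indexing and the identification $\lim_{\ot} K_j^\bullet = \Lambda^\bullet\fh\otimes \lim_{\ot} V_j$ noted above, is precisely the asserted isomorphism $\oH_{p-1}(\fh, \lim_{\ot} V_j) \IsoTo \lim_{\ot}\oH_{p-1}(\fh, V_j)$. There is no real obstacle here beyond bookkeeping; the only point to be careful about is the degree shift from $p$ to $p-1$, which is forced by the shape of \Cref{lem:Gro} (finiteness in degree $p$ controls the commutation of $\lim_{\ot}$ with cohomology one step lower).
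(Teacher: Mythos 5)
Your proof is correct and follows essentially the same route as the paper: re-index the Koszul complexes cohomologically, verify the termwise ML condition via surjectivity of the transition maps and the cohomological ML condition via finite-dimensionality of $\oH_p(\fh,V_j)$, and apply \Cref{lem:Gro}. Your explicit identification of $\lim_{\ot}K_j^\bullet$ with the Koszul complex of $\lim_{\ot}V_j$ is a detail the paper leaves implicit, but there is no substantive difference.
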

\begin{proof}
For any $j$, let $K^{\bullet} _{j}$ denote the Koszul complex of $V_j$, parameterized in the opposite direction, in order to make the indexing compatible with the previous lemma. Since tensoring with a finite-dimensional vector space is an exact functor, all the maps in the sequence $$\dots\to K^{n} _{j+1}\to K^{n} _{j}\to \dots \to K^{n} _{0}$$ are onto for any $n$, and thus this sequence is ML. Since the homologies $\oH_p(\fh,V_j)$ are finite-dimensional, they also form a ML sequence. Thus the statement follows from \Cref{lem:Gro}.
\end{proof}

\section{Proof of \Cref{thm:main}}\label{sec:PfMain}

\change{Let $H$ be a Nash group.
A (continuous)  character $\chi: H \to \bC^{\times}$ is called tempered provided $\chi$ is
tempered as a function on $H$.

\begin{remark} \label{char}(a) All unitary characters are tempered. If $H$ is a unipotent group, then a character $\chi$ is tempered
if and only if $\chi$ is unitary. If $H$ is a reductive algebraic group, then all characters are tempered.
\par\noindent (b) If $\chi$ is a tempered character, then the self map $ f\mapsto \chi f$ on $\Sc(H)$ is an
isomorphism. This follows from \Cref{thm:ScBasic}\eqref{p:Tempered}.
\end{remark}

In the sequel all characters to be considered are required to be tempered.  Further  we use the same symbol
$\chi$ for the derived infinitesimal character $d\chi: \fh\to\bC$ of the Lie algebra $\fh$ of $H$. All
occurring infinitesimal characters of $\fh$ in this paper are requested to exponentiate to a tempered character
of $H$.}

\subsection{Case of transitive action}
The goal of  this section is to  prove the following proposition.
\begin{prop}\label{prop:Shap}
Let $H$ be a Nash group and $Z$ be a transitive Nash $H$-manifold. Let $\cE$ be a Nash $H$-equivariant bundle over $Z$ and $\chi$ be a tempered character of $H$.
Then all the homologies $\oH_p(\fh,\Sc(Z,\cE)\otimes \chi)$ are finite-dimensional.
\end{prop}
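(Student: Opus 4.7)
Write $Z=H/S$ where $S=\Stab_H(z_0)$ is the stabilizer of a point, let $\fs=\Lie(S)$, and let $\pi:H\to H/S$ denote the projection. The plan is a Shapiro-type argument built around the relative twisted de-Rham complex with Schwartz coefficients
$$K^\bullet := TDR_{\Sc}^{\cE}(H\to H/S)\otimes \chi,$$
a bounded complex of nuclear \Fre $\fh$-modules (with $\fh$ acting by infinitesimal left translation on $H$, lifted equivariantly to the bundle and twisted by $\chi$). By \Cref{thm:RelDR}, its $q$-th cohomology is $\Sc(H/S,\cE\otimes\cH^q)\otimes \chi$, where $\cH^q$ is an $H$-equivariant vector bundle on $H/S$ with finite-dimensional fiber $\oH_{\dim S-q}(S)$ (finite by \Cref{thm:HomFin}); in particular, for $q=\dim S$ one recovers (up to a trivial twist when $S$ is connected) the module $\Sc(Z,\cE)\otimes \chi$.

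The first technical step is that each $K^k$ has finite-dimensional $\fh$-homology. Since $H$ acts freely and transitively on itself, every $H$-equivariant bundle over $H$ is trivial; hence $K^k\cong (\Sc(H)\otimes W_k)\otimes\chi$ for a finite-dimensional space $W_k$, with $\fh$ acting on the $\Sc(H)$ factor by left translation and trivially on $W_k$. Since $\chi$ lifts to a tempered character of $H$ (\Cref{char}), multiplication by $\chi$ intertwines $\Sc(H)\otimes\chi$ with $\Sc(H)$ as $\fh$-modules, reducing the computation to $\oH_*(\fh,\Sc(H))\otimes W_k$. By \Cref{lem:DRK} this is the cohomology of $DR_{\Sc}(H)$, which is finite-dimensional by \Cref{thm:RelDR} (applied with $X$ a point) combined with \Cref{thm:HomFin}. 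Forming the double complex that pairs the $\fh$-Koszul differential with the de-Rham differential of $K^\bullet$, the spectral sequence that takes Koszul first then has finite-dimensional $E_1$; since $K^\bullet$ is a bounded complex, the hyperhomology $\mathbb{H}_n(\fh,K^\bullet)$ is therefore finite-dimensional for every $n$.

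The hard part is to descend from the finiteness of $\mathbb{H}_n(\fh,K^\bullet)$ to the individual space $\oH_p(\fh,\Sc(Z,\cE)\otimes\chi)$. The other spectral sequence has $E_2^{p,q}=\oH_p(\fh,\Sc(H/S,\cE\otimes\cH^q)\otimes\chi)$, with the desired homology sitting at $q=\dim S$; but a priori these $E_2$ entries could be infinite-dimensional even though $E_\infty$ is a subquotient of the finite-dimensional $\mathbb{H}$. I would handle this by reinterpreting $K^\bullet$, via a fiberwise application of \Cref{lem:DRK}, as the Koszul complex computing the $\fs$-homology of $\Sc(H,\pi^*\cE)\otimes\chi$ under the commuting right $\fs$-action on $H$ (with a twist by the line $\det(\fh/\fs)^*$). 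A Shapiro-style comparison for the inclusion $\fs\subset\fh$ then produces an isomorphism
$$\oH_p(\fh,\Sc(Z,\cE)\otimes\chi)\ \cong\ \oH_p(\fs,\, \cE|_{z_0}\otimes\det(\fh/\fs)^*\otimes\chi|_\fs),$$
whose right-hand side is the $\fs$-homology of a finite-dimensional $\fs$-module and is therefore manifestly finite-dimensional in every degree. The principal technical obstacle is making this Shapiro identification rigorous in the nuclear \Fre setting: one has to check that the two commuting actions, the various orientation twists, and the fiberwise de-Rham/Koszul identification all assemble coherently at the level of Schwartz sections, and that the resulting quasi-isomorphism computes the $\fh$-homology in all degrees, not only degree zero.
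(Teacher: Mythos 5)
Your first two steps coincide with the paper's: the same relative de Rham complex $TDR_{\Sc}^{\cE}(H\to Z)$, the same trivialization over $H$ reducing each term to $\Sc(H)\otimes W_k$ with $\chi$ absorbed by \Cref{char}, and the same computation of $\oH_p(\fh,\Sc(H))$ via \Cref{lem:DRK}, \Cref{thm:RelDR} and \Cref{thm:HomFin}. You also correctly isolate the crux: finiteness of the hyperhomology does not bound the entries $\oH_p(\fh,\Sc(Z,\cE\otimes\cH^q)\otimes\chi)$ of the other spectral sequence, since an infinite-dimensional $E_2$-entry can perfectly well have finite-dimensional $E_\infty$. But the step you propose to close this gap --- the isomorphism $\oH_p(\fh,\Sc(Z,\cE)\otimes\chi)\cong \oH_p(\mathfrak{s},\,\cE|_{z_0}\otimes\det(\fh/\mathfrak{s})^*\otimes\chi|_{\mathfrak{s}})$ --- is false, not merely unproved. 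Take $S=\{e\}$, so $Z=H$: the right-hand side is then concentrated in degree $0$, whereas $\oH_p(\fh,\Sc(H))$ is the topological homology of $H$ (Case 1 of the paper's proof), which is nonzero in positive degree whenever $H$ is not contractible, e.g.\ $H=\mathrm{SO}(2)$ and $p=1$. The reason is exactly the phenomenon you are trying to circumvent: the ``$\fh$-first'' spectral sequence of your double complex does not collapse either, its second page being (for connected $S$) $\oH_q(\mathfrak{s},W\otimes\delta)\otimes\oH_p(H)$, so any honest Shapiro statement here identifies only the abutment with a sum of such terms and acquires correction terms from $\oH_{>0}(H)$; in particular it still says nothing about the individual $E_2$-entries of the first filtration.

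The paper closes the gap by a purely homological bootstrap that you are missing. The key observation is that \emph{all} the cohomologies of $TDR_{\Sc}^{\cE}(H\to Z)\otimes\chi$ are again of the form $\Sc(Z,\cE_j)\otimes\chi$ for Nash bundles $\cE_j$, i.e.\ objects of the same class one is trying to control. One therefore inducts on the homological degree $i$, quantifying over all bundles $\cE$ and all tempered characters $\chi$ simultaneously: assuming every $\Sc(Z,\cE')\otimes\chi'$ is $i$-homologically finite, \Cref{lem:ResolFin} (a long-exact-sequence argument applied to the brutal truncations of a bounded complex whose terms are homologically finite) shows that $\oH_0$ of the complex --- which is $\Sc(Z,\cE)\otimes\chi$ itself --- is $(i+2)$-homologically finite, and the induction closes. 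You would need to replace your last paragraph by this bootstrapping argument (or a genuine substitute for it); as written, the proof does not go through.
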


\begin{definition}
We call an $\fh$-module $V$ \emph{homologically finite} if $\oH_p(\fh,V)$ is finite-dimensional for any $p$ and \emph{$i$-homologically finite} if $\oH_p(\fh,V)$ is finite-dimensional for any $p\leq i$.
\end{definition}

\begin{lem}\label{lem:ResolFin}
Let $$\cC: \,\dots\to0\to C_{n-1}\overset{d_{n-2}}{\to} \dots\overset{d_0}{\to} C_0\to 0\to\cdots$$ be a bounded complex consisting of homologically finite $\fh$-modules. Fix $i\geq 0$ and assume that $\oH_j(\cC)$ is $i$-homologically finite for any $j> 0$.  Then $\oH_0(\cC)$ is $i+2$-homologically finite.
\end{lem}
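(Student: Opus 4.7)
My plan is to pass to the hyperhomology of the complex $\cC$ for the right-exact functor $F(V) := V_{\fh} = \oH_0(\fh, V)$, and to extract the desired finiteness from the two standard spectral sequences attached to a bounded complex. Concretely, choose a Cartan--Eilenberg free resolution $P_{\bullet,\bullet} \to \cC$ by $U(\fh)$-modules, apply $F$ term-wise, and form the total complex, whose homology is the hyperhomology $\mathbb{H}_n(\fh, \cC)$. Both filtrations of the double complex yield convergent first-quadrant spectral sequences abutting to $\mathbb{H}_n(\fh, \cC)$.

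The first observation is that $\mathbb{H}_n(\fh, \cC)$ is finite-dimensional for every $n$. Filtering by the complex direction produces a spectral sequence whose $E^2$-page involves $\oH_q(\fh, C_p)$: every such entry is finite-dimensional by the assumption that each $C_j$ is homologically finite, and only finitely many are non-zero since $\cC$ is bounded. The abutment $\mathbb{H}_n(\fh, \cC)$ is therefore finite-dimensional. This is the only use of the full strength of the hypothesis on the $C_j$.

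The other spectral sequence has $E^2_{p,q} = \oH_p(\fh, \oH_q(\cC))$ with differentials $d_r \colon E^r_{p,q} \to E^r_{p-r,\, q+r-1}$, and abuts to the same hyperhomology. I focus on the bottom row $q = 0$, whose entries $E^2_{p,0} = \oH_p(\fh, \oH_0(\cC))$ are exactly the groups we wish to bound. Since no differential enters this row (they would originate from negative $q$-coordinate), one obtains a descending filtration
\[
E^2_{p,0} \supseteq E^3_{p,0} \supseteq \cdots \supseteq E^{\infty}_{p,0},
\]
where $E^{r+1}_{p,0} = \ker d_r$ and the successive quotient $E^r_{p,0}/E^{r+1}_{p,0}$ embeds into $E^r_{p-r,\, r-1}$, itself a subquotient of $E^2_{p-r,\, r-1}$. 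Because $\cC$ is bounded, only finitely many $r \geq 2$ contribute non-trivially.

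Now fix $p \leq i + 2$. The limit term $E^{\infty}_{p,0}$ is a subquotient of $\mathbb{H}_p(\fh, \cC)$, hence finite-dimensional by the first step. For every $r \geq 2$ one has $r - 1 \geq 1$ and $p - r \leq i$, so $E^2_{p-r,\, r-1} = \oH_{p-r}(\fh, \oH_{r-1}(\cC))$ lies in exactly the range covered by the hypothesis that $\oH_{r-1}(\cC)$ is $i$-homologically finite. Therefore $E^2_{p,0}$ is a finite iterated extension of finite-dimensional spaces, hence finite-dimensional, proving $(i+2)$-homological finiteness of $\oH_0(\cC)$. The only delicate point is the index bookkeeping: the constraints $j \geq 1$ and $p - r \leq i$ built into the hypotheses match precisely the targets of the non-trivial differentials leaving the bottom row. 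No topological or analytic subtleties intervene, as the statement is purely algebraic.
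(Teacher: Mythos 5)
Your proof is correct, and it takes a genuinely different route from the paper. The paper's argument is an explicit induction on the length of $\cC$: it applies the long exact sequence in Lie algebra homology to the two short exact sequences $0 \to I_0 \to C_0 \to \oH_0(\cC) \to 0$ and $0 \to \oH_1(\cC) \to C_1/I_1 \to I_0 \to 0$ (where $I_p = \im d_p$), reducing the claim to the truncated complex $\cD_p := C_{p+1}$; each dimension shift loses one index, which explains the $+2$ in the conclusion. Your argument replaces this hands-on dimension shifting with the two hyperhomology spectral sequences of the bounded complex $\cC$ for the right-exact functor $V \mapsto V_\fh$: the column filtration establishes finite-dimensionality of the abutment $\mathbb{H}_\bullet(\fh,\cC)$, and the row filtration with $E^2_{p,q} = \oH_p(\fh, \oH_q(\cC))$ then isolates the bottom row $q=0$, whose entries are bounded via the limit term (subquotient of the abutment) and the finitely many outgoing differentials $d_r\colon E^r_{p,0} \to E^r_{p-r,r-1}$, landing in subquotients of $\oH_{p-r}(\fh, \oH_{r-1}(\cC))$. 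Your index bookkeeping ($p \le i+2$, $r \ge 2 \Rightarrow p-r \le i$, $r-1 \ge 1$) is exactly right and explains the $+2$ just as transparently as the paper's two-step shift does. The spectral sequence packaging is more conceptual and avoids the explicit induction, at the cost of invoking Cartan--Eilenberg resolutions and double-complex machinery; the paper's proof is more elementary and self-contained, which fits the surrounding exposition. One small imprecision: the first spectral sequence has $E^1_{p,q} = \oH_q(\fh, C_p)$ rather than $E^2$, but the $E^2$-terms are subquotients of these, so the finiteness conclusion is unaffected.
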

\begin{proof}
We prove the statement by induction on the length $n$ of $\cC$. Let $k\leq i+2$. We need to show that $\oH_{k}(\fh,\oH_0(\cC))$ is finite-dimensional. Denote $I_p:=\Im d_p$ for all $p$.   The short exact  sequence
$$ 0 \to I_0\to C_0\to \oH_0(\cC)\to 0$$
gives rise  to the long exact sequence
$$ \cdots \to \oH_k(\fh,I_0)\to \oH_k(\fh,C_0)\to \oH_k(\fh,\oH_0(\cC))\to \oH_{k-1}(\fh,I_0)\to \cdots$$
We know that $\oH_k(\fh,C_0)$ is finite-dimensional and thus it is enough to show that $\oH_{k-1}(\fh,I_0)$ is finite-dimensional.   The short exact  sequence
$$ 0 \to \oH_1(\cC)\to C_1/I_1\to I_{0}\to 0$$
gives rise  to the long exact sequence
$$ \cdots \to \oH_{k-1}(\fh,\oH_1(\cC))\to \oH_{k-1}(\fh,C_1/I_1)\to \oH_{k-1}(\fh,I_0)\to \oH_{k-2}(\fh,\oH_1(\cC))\to \cdots$$
We know that $\oH_{k-2}(\fh,\oH_1(\cC))$ is finite-dimensional and thus it is enough to show that $\oH_{k-1}(\fh,C_1/I_1)$ is finite-dimensional. Let $D_p:=C_{p+1}$ and consider the complex
$$\cD: \,\dots\to0\to D_{n-2}\to \dots\to D_0\to 0\to\cdots.$$
Note that $\oH_p(\cD)=\oH_{p-1}(\cC)$ for any $p>0$ and $\oH_0(\cD)=C_1/I_{1}$. By the induction hypothesis $\oH_{k-1}(\fh,\oH_0(\cD))$ is finite-dimensional. Since $\oH_{k-1}(\fh,C_1/I_1)=\oH_{k-1}(\fh,\oH_0(\cD))$ the lemma follows.
\end{proof}

\begin{proof}[Proof of \Cref{prop:Shap}]$\,$
\begin{enumerate}[{Case} 1.]
\item  $Z=H; \, \cE $ and $\chi$ are trivial.\\
By \Cref{lem:DRK}, the Koszul complex of $\Sc(H)$ is the de-Rham complex with Schwartz coefficients of $H$, and thus by \Cref{thm:RelDR} the homologies of $\Sc(H)$ equal the homologies of $H$ as a topological space, that are finite by \Cref{thm:HomFin}.

\item \change{$Z=H; \, \cE $ and $\chi$ are arbitrary. \\As all $H$-equivariant vector bundles on $H$ are trivial we obtain with
Remark \ref{char}(b) that  $\Sc(H,\cE)\otimes \chi \simeq \Sc(H)\otimes V$ for some finite-dimensional vector space $V$ with trivial action of $H$.}

\item General case.\\
\change{Let $Z=H/L$ with $L<H$ a Nash subgroup. With $F=H$ this yields a locally trivial fiber bundle
$F\to Z, h\mapsto hL$ with orientable fibers isomorphic to $L$.
\RamiC{Recall that $TDR_{\Sc}^{\cE}(F\to Z)$ denotes} the twisted relative de-Rham complex of $F$ over $Z$ with Schwartz coefficients
(see Notation \ref{twist}).} \RamiC{Let $\cH^i_{F\to Z}$ be the bundles on $Z$ given by \Cref{thm:RelDR}. Note that $\cH^0_{F\to Z}$ is a line bundle and let $\cE'=\cE\otimes (\cH^0_{F\to Z})^*.$}

We will prove by induction on $i$ that $\Sc(Z,\cE)\otimes \chi$ is $i$-homologically finite for any bundle $\cE$ and character $\chi$. As the base we take $i=-1$.
Let $\cC$ be the chain complex given by $$\cC_k:=TDR_{\Sc}^{\cE'}(F\to Z)^{\dim F -\dim Z-k}.$$
 By \Cref{thm:RelDR} the homologies of $\cC$ are $\Sc(Z,\cE_i)$, for certain Nash bundles $\cE_i$, where $\cE_0=\cE$.
Thus the homologies of $\cC\otimes \chi$ are $\Sc(Z,\cE_i)\otimes \chi$. Hence the induction hypothesis implies that for any $j$,  $\oH_j(\cC\otimes \chi)$ is $i-1$-homologically finite. By the previous case the complex $\cC\otimes \chi$ consists of homologically finite
$\fh$-modules.
Therefore, \Cref{lem:ResolFin} implies that $\Sc(Z,\cE)\otimes \chi$, which equals $\oH_0(\cC\otimes \chi)$, is $i$-homologically finite.
\end{enumerate}
\end{proof}

\subsection{General case}
\DimaC{
\begin{lem}\label{cor:Fin}
Let a real algebraic group $H$ act on a real algebraic manifold $X$ with
finitely many orbits. Let $\fh$ be the Lie algebra of $H$. Let $\cE$ be an algebraic $H$-equivariant
bundle on $X$. Let $Z\subset X$ be a closed $G$-orbit. Consider the filtration $\Sc_Z(X,\cE)^i$ of $\Sc_Z(X,\cE)$  as in Lemma \ref{lem:Borel}.
Then there exists $C\in \mathbb{N}$ such that for every
character $\chi$ of $\fh$ and any $i$ we have
\change{$$\dim \oH_0(\fh, \big(\Sc_Z(X,\cE)/\Sc_Z(X,\cE)^i\big)\otimes \chi)<C.$$}
\end{lem}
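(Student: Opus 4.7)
The plan is to bound $\oH_0(\fh, V_i \otimes \chi)$ uniformly by combining the universal finiteness bound of Theorem \ref{thm:Fin} on tempered invariant distributions with the strictness result of Lemma \ref{lem:FinGood}. Throughout, set $V_i := \Sc_X(Z,\cE)/\Sc_X(Z,\cE)^i$. The strategy has three stages: first bound the \emph{continuous} dual of $\oH_0(\fh, V_i \otimes \chi)$ uniformly, then prove algebraic finite-dimensionality of $\oH_0$ by induction on $i$, and finally invoke Lemma \ref{lem:FinGood} to equate the two.

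For the first stage, I will observe that since $Z$ is $H$-stable and the filtration $\Sc_X(Z,\cE)^\bullet$ of Lemma \ref{lem:Borel} is induced by the order of vanishing along $Z$, it consists of $\fh$-stable subspaces; so does $\Sc(X-Z,\cE) \subset \Sc(X,\cE)$. Composing the quotient maps gives a continuous $\fh$-equivariant surjection $\Sc(X,\cE) \twoheadrightarrow V_i$. Dualizing, the continuous dual of $\oH_0(\fh, V_i \otimes \chi)$ (in its quotient topology) embeds, as a space of continuous $(\fh, \chi')$-equivariant functionals for a character $\chi'$ determined by $\chi$, into $\Sc^*(X,\cE)^{(\fh, \chi')}$. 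By Theorem \ref{thm:Fin}, the latter has dimension less than some constant $C_0$ depending only on $H$, $X$, and $\cE$, uniformly over all characters of $\fh$.

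For the second stage, I induct on $i$; the base case $V_0 = 0$ is trivial. For the step, Lemma \ref{lem:Borel} supplies a short exact sequence of $\fh$-modules
$$0 \to \Sc(Z, \Sym^i(CN_Z^X) \otimes \cE) \to V_{i+1} \to V_i \to 0,$$
the $\fh$-equivariance being forced by the $H$-invariance of $Z$. Tensoring with $\chi$ and taking the long exact sequence in $\fh$-homology, $\oH_0(\fh, V_{i+1} \otimes \chi)$ sits between $\oH_0(\fh, \Sc(Z, \Sym^i(CN_Z^X) \otimes \cE) \otimes \chi)$, which is finite-dimensional by Proposition \ref{prop:Shap} applied to the transitive $H$-action on $Z$, and $\oH_0(\fh, V_i \otimes \chi)$, finite-dimensional by hypothesis. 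Hence $\oH_0(\fh, V_{i+1} \otimes \chi)$ is finite-dimensional.

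Finally, since $\oH_0(\fh, V_i \otimes \chi)$ has finite algebraic dimension, Lemma \ref{lem:FinGood} implies that $\fh(V_i \otimes \chi)$ is closed in $V_i \otimes \chi$, so $\oH_0(\fh, V_i \otimes \chi)$ is a finite-dimensional Hausdorff topological vector space whose algebraic dual coincides with its continuous dual. Combining with the first stage yields $\dim \oH_0(\fh, V_i \otimes \chi) < C_0$ uniformly in $i$ and $\chi$, so $C := C_0$ works. The main subtlety is that the inductive step by itself produces a bound that a priori grows with $i$ and depends on $\chi$; uniformity enters only through Theorem \ref{thm:Fin}, which is why the detour through the continuous dual (rather than a direct dimension count on the quotient) is essential.
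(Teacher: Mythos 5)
Your proposal is correct and follows essentially the same route as the paper: finite-dimensionality of $\oH_0(\fh,V_i\otimes\chi)$ via the finite filtration with graded pieces $\Sc(Z,\Sym^j(CN_Z^X)\otimes\cE)$ and Proposition \ref{prop:Shap}, then separatedness via Lemma \ref{lem:FinGood}, then the uniform bound by embedding the dual into $\Sc^*(X,\cE)^{(\fh,-\chi)}$ and invoking Theorem \ref{thm:Fin}. The only difference is the order of presentation (the paper runs the finite filtration directly rather than phrasing it as an induction on $i$), which is immaterial.
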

\begin{proof}  \change{Fix $i$ and set $F_i:=\Sc_Z(X,\cE)/\Sc_Z(X,\cE)^i$. According to Lemma
\ref{lem:Borel}, $\Sc_Z(X,\cE)/\Sc_Z(X,\cE)^i$ has a finite decreasing filtration $F_i^j:= \Sc_Z(X,\cE)^j/\Sc_Z(X,\cE)^i$,
$0\leq j\leq i$, with quotients isomorphic to $\Sc(Z,\Sym^j(CN_Z^X)\otimes \cE)$}. By \Cref{prop:Shap},
$\oH_0(\fh, \Sc(Z,\Sym^i(CN_Z^X)\otimes \cE)\otimes \chi)$ is finite-dimensional.
This implies that $\oH_0(\fh, \Sc_Z(X,\cE)/\Sc_Z(X,\cE)^i\otimes \chi)$ is finite-dimensional. By \cref{lem:FinGood} this homology space is separated. Thus, it has the same dimension as its dual space, which in turn naturally embeds into the space $\Sc^*(X,\cE)^{(\fh,-\chi)}$, whose dimension is bounded by \Cref{thm:Fin}.
\end{proof}

\begin{lem}\label{lem:key2}
Let $Z\subset X$ be a closed $H$-orbit and $U$ be the complement to $Z$ in $X$. Then $(\Sc(X,E)/\Sc(U,E))_{(\fh,\chi)}$ is finite-dimensional. \end{lem}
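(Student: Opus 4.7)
The plan is to express $\Sc(X,\cE)/\Sc(U,\cE)$ as an inverse limit of quotients already controlled by Lemma \ref{cor:Fin}, then interchange that limit with $\oH_0(\fh,\,\cdot\,)$ using Corollary \ref{lem:LimHom}. Since $Z$ is closed in $X$ we have $\overline{Z}=Z$, and therefore $\Sc(X,\cE)/\Sc(U,\cE)=\Sc_X(Z,\cE)$. Setting $F_i:=\Sc_X(Z,\cE)/\Sc_X(Z,\cE)^i$, part (3) of Lemma \ref{lem:Borel} gives
$$\Sc_X(Z,\cE)\otimes\chi\;\cong\;\lim_{\ot}(F_i\otimes\chi),$$
with surjective transition maps $F_{i+1}\otimes\chi \onto F_i\otimes\chi$.

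Next I would verify that each $F_i\otimes\chi$ is homologically finite in every degree. By part (2) of Lemma \ref{lem:Borel}, $F_i$ admits a \emph{finite} decreasing filtration whose successive quotients are isomorphic to $\Sc(Z,\Sym^j(CN_Z^X)\otimes\cE)$ for $0\le j<i$, and twisting by the one-dimensional module $\chi$ preserves this filtration. Proposition \ref{prop:Shap}, applied to the transitive $H$-action on $Z$ and the equivariant bundle $\Sym^j(CN_Z^X)\otimes\cE$, shows each subquotient is homologically finite. An induction along the finite filtration, using the long exact sequence of Lie algebra homology at each step, propagates homological finiteness to $F_i\otimes\chi$; in particular $\oH_1(\fh,F_i\otimes\chi)$ is finite-dimensional for every $i$.

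With the hypothesis of Corollary \ref{lem:LimHom} met for $p=1$, I obtain
$$\oH_0\bigl(\fh,\Sc_X(Z,\cE)\otimes\chi\bigr)\;\cong\;\lim_{\ot}\oH_0\bigl(\fh,F_i\otimes\chi\bigr).$$
Lemma \ref{cor:Fin} bounds $\dim \oH_0(\fh,F_i\otimes\chi)\le C$ uniformly in $i$. A compatible system of finite-dimensional vector spaces with uniformly bounded dimensions has a finite-dimensional inverse limit (the images of the transition maps inside any fixed $\oH_0(\fh,F_j\otimes\chi)$ form a descending chain of subspaces, hence must stabilize), and this yields the lemma.

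The main obstacle, and the reason the argument requires the three ingredients in tandem, is the step establishing $1$-homological finiteness of each $F_i\otimes\chi$ uniformly in $i$: one cannot see this directly from $\Sc_X(Z,\cE)$ itself, since the full Borel-style filtration has infinitely many strata. The careful bookkeeping along the finite filtration of each $F_i$, combined with Proposition \ref{prop:Shap}, is what makes Corollary \ref{lem:LimHom} applicable; once that is in place, the uniform bound of Lemma \ref{cor:Fin} closes the argument.
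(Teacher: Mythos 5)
Your argument is correct and follows essentially the same route as the paper's proof: identify $\Sc(X,\cE)/\Sc(U,\cE)$ with $\Sc_X(Z,\cE)$, realize it as the inverse limit of the quotients $F_i$ from Lemma \ref{lem:Borel}, establish homological finiteness of each $F_i\otimes\chi$ via the finite filtration and Proposition \ref{prop:Shap} so that Corollary \ref{lem:LimHom} applies, and then use the uniform bound of Lemma \ref{cor:Fin} together with surjectivity of the transition maps to conclude that the system of $\oH_0$'s stabilizes. The only difference is that you spell out more explicitly the finite-filtration induction behind the homological finiteness of each $F_i\otimes\chi$, which the paper leaves implicit.
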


\begin{proof} Let $\Sc_Z(X,\cE)^i\subset \Sc_Z(X,\cE)=\Sc(X,E)/\Sc(U,E)$ be as in \Cref{lem:Borel}.
By \Cref{lem:Borel}, $\Sc(X,E)/\Sc(U,E)=\lim\limits_\ot \Sc(X,E)/\Sc_Z(X,\cE)^i$. By  \Cref{lem:Borel,prop:Shap}, $\oH_p(\fh, (\Sc(X,E)/\Sc_Z(X,\cE)^i)\otimes \chi)$ are finite-dimensional, and by \Cref{lem:LimHom},  $$\oH_0(\fh, \lim\limits_\ot \Sc(X,E)/\Sc_Z(X,\cE)^i\otimes \chi)\simeq\lim\limits_\ot \oH_0(\fh, \Sc(X,E)/\Sc_Z(X,\cE)^i\otimes \chi).$$
By \Cref{cor:Fin}, the dimensions of $\oH_0(\fh, \Sc(X,E)/\Sc_Z(X,\cE)^i\otimes \chi)$ stabilize. Since for any $i$, the natural map
$$\oH_0(\fh, \Sc(X,E)/\Sc_Z(X,\cE)^{i+1}\otimes \chi)\to \oH_0(\fh, \Sc(X,E)/\Sc_Z(X,\cE)^i\otimes \chi)$$ is onto, the sequence $\oH_0(\fh, \Sc(X,E)/\Sc_Z(X,\cE)^i\otimes \chi)$ stabilizes and thus its limit is finite-dimensional.
\end{proof}
}

\begin{proof}[Proof of  \Cref{thm:main}]
We prove the theorem by induction on the number of orbits. If this number is zero the statement is trivial. Otherwise, let $Z$ denote a closed orbit and let $U$ denote its complement. Then we have the short exact sequence
$$0\to \Sc(U,E)\to \Sc(X,E)\to \Sc(X,E)/\Sc(U,E)\to 0$$
and the right-exact sequence
$$ \oH_0(\fh, \Sc(U,E)\otimes \chi)\to \oH_0(\fh, \Sc(X,E)\otimes \chi)\to \oH_0(\fh, \Sc(X,E)/\Sc(U,E))\otimes \chi)\to 0.$$
The space $ \oH_0(\fh, \Sc(U,E)\otimes \chi)$ is finite-dimensional by the induction hypothesis and the space $\oH_0(\fh,\Sc(X,E)/\Sc(U,E))\otimes \chi)$ is finite-dimensional by Lemma \ref{lem:key2}. Thus, the space $\oH_0(\fh,\Sc(X,E)\otimes \chi)$ is also finite-dimensional. It is then separated by Lemma \ref{lem:FinGood}.
\end{proof}

\section{Relation to Comparison Theorems}\label{compa}
\setcounter{lemma}{0}
In this section we let $G$ be an algebraic real reductive group.
We fix a maximal compact subgroup $K$.  We let $\cM(G)$ denote the category of smooth admissible (finitely generated) \Fre representations of moderate growth and $\cM(\fg,K)$ denote  the category of admissible (finitely generated) $(\fg,K)$-modules. For $\pi\in \cM(G)$ let $\pi^{HC}\in \cM(\fg,K)$ denote the space of $K$-finite vectors. By the celebrated result of Casselman and Wallach (see \cite{CasGlob,Wal2,BK}) the functor $\pi \mapsto \pi^{HC}$ is an equivalence of categories. The aim of this section is to study the relations between this equivalence of categories and restrictions to subalgebras of $\g$.

An algebraic subgroup $H<G$ is called {\it real spherical} if the
action of  a minimal
parabolic subgroup $P$ on $G/H$ has  an open orbit.
We recall that real sphericity implies that the double coset space $P\backslash G / H$ is
finite \cite{KS1}. Typical examples for real spherical subgroups are the nilradicals of minimal parabolic subgroups, and symmetric subgroups.

\par Note that the inclusion mapping $\pi^{HC}\into \pi$ yields a morphism in homology
$\oH_\bullet(\fh, \pi^{HC}) \to \oH_\bullet(\fh, \pi)$. For instance if
$H$ is the unipotent radical of a minimal parabolic subgroup, then the  Casselman comparison theorem (see \cite{HT} for the case of split $G$) asserts
that these two homology theories coincide. In this section we generalize some aspects of this theorem.

\subsection{Comparison for the zero homology}
\begin{thm}\label{thm:compa} Let $H\subset G$ be a real spherical subgroup,  and $\chi$ be a character of $H$.
Let $\pi\in \cM(G)$. Then
\begin{enumerate}[(i)]
\item
The space $\pi_{(\fh,\chi)}:=\oH_0(\fh, \pi\otimes \chi)$ is separated and finite-dimensional.
\item The natural homomorphism
$$\Phi_0:\oH_0(\fh, \pi^{\HC}\otimes \chi)\to  \oH_0(\fh, \pi\otimes \chi)$$
is an epimorphism.
\item $\Phi_0$ is an isomorphism if and only if the natural inclusion
$$i_0:\oH^0(\fh, \pi'\otimes(-\chi))\hookrightarrow  \oH^0(\fh, (\pi^{\HC})^{*}\otimes(-\chi))$$
is an isomorphism, where $(\pi^{\HC})^{*}$ denotes the linear dual.
\end{enumerate}
\end{thm}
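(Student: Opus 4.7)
The strategy is to reduce part (i) to \Cref{thm:main} via the Casselman subrepresentation theorem, and then to obtain (ii) and (iii) by essentially formal arguments (density and linear-algebra duality, respectively). Throughout, I will use that $\pi$ is a nuclear \Fre space on which $G$, and hence $\fh$, acts smoothly.

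\emph{Part (i).} I would first realize $\pi$ as a topological quotient of a minimal principal series. Applying the Casselman subrepresentation theorem to the smooth contragredient $\tilde\pi\in\cM(G)$ yields a closed embedding $\tilde\pi\hookrightarrow I$, where $I$ is a smooth principal series representation induced from a minimal parabolic $P\subset G$. Taking smooth contragredients (an equivalence of categories on $\cM(G)$ that sends principal series to principal series) produces a topological surjection $\tilde I\twoheadrightarrow\pi$. Since $G/P$ is compact, $\tilde I$ can be realized as $\Sc(G/P,\cE)$ for an appropriate Nash $G$-equivariant vector bundle $\cE$. As $H$ is real spherical, the set $H\backslash G/P$ is finite by \cite{KS1}, so \Cref{thm:main} applies and gives that $\oH_0(\fh,\Sc(G/P,\cE)\otimes\chi)$ is Hausdorff and finite-dimensional. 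The right-exactness of the coinvariants functor $\oH_0$ then yields a surjection onto $\oH_0(\fh,\pi\otimes\chi)$, proving finite-dimensionality. To upgrade this to separatedness, I would note that the twisted subspace $(\fh-\chi)\pi\subset\pi$ is the image of the continuous linear map $\fh\otimes\pi\to\pi$, $X\otimes v\mapsto(X-\chi(X))v$, where $\fh\otimes\pi\cong\pi^{\dim\fh}$; this image has finite codimension in $\pi$, so by \Cref{lem:FinGood} it is closed, whence $\pi_{(\fh,\chi)}$ is Hausdorff.

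\emph{Part (ii).} I would use that $\pi^{\HC}$ is dense in $\pi$, a standard property of Casselman--Wallach globalizations. The continuous quotient $\pi\twoheadrightarrow\pi_{(\fh,\chi)}$ sends $\pi^{\HC}$ onto $\im\Phi_0$, which is therefore dense in $\pi_{(\fh,\chi)}$. But by (i) the target is a finite-dimensional Hausdorff space, and any dense subspace of such a space is the whole space. Hence $\Phi_0$ is surjective.

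\emph{Part (iii).} This is a formal consequence of (i) and (ii). Because $\pi_{(\fh,\chi)}$ is finite-dimensional and Hausdorff by (i), its algebraic dual coincides with its continuous dual and is naturally identified with the space of continuous $(\fh,\chi)$-eigen functionals on $\pi$, i.e. with $\oH^0(\fh,\pi'\otimes(-\chi))$. On the algebraic side one has the tautological identification $(\pi^{\HC}_{(\fh,\chi)})^*\cong \oH^0(\fh,(\pi^{\HC})^*\otimes(-\chi))$. Under these identifications, the linear dual of $\Phi_0$ is precisely $i_0$ (restriction of functionals along $\pi^{\HC}\hookrightarrow\pi$). By (ii), $\Phi_0$ is surjective, so $i_0=\Phi_0^{*}$ is injective (this also follows directly from density of $\pi^{\HC}$). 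A surjective linear map is an isomorphism iff its dual is surjective; since $i_0$ is already injective, this is equivalent to $i_0$ being an isomorphism.

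\emph{Main obstacle.} I expect the only substantive step to be the reduction in (i). The soft arguments in (ii) and (iii) are robust, but the dualization of the Casselman subrepresentation theorem must be carried out carefully so that the resulting map $\tilde I\twoheadrightarrow\pi$ is genuinely a topological quotient of NF-spaces, and so that the tempered twist by $\chi$ is transported through the reduction in a way that lets \Cref{thm:main} (which already allows a tempered character) apply directly.
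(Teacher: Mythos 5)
Your proposal is correct and follows essentially the same route as the paper: part (i) is reduced to \Cref{thm:main} via the dual Casselman subrepresentation theorem together with the finiteness of $H\backslash G/P$, part (ii) follows from density of $\pi^{\HC}$ in $\pi$ plus the Hausdorffness established in (i), and part (iii) is the same perfect-pairing/dualization argument. The only (harmless) variations are that you invoke \Cref{thm:main} for the $H$-action on the compact space $G/P$, realizing the principal series as $\Sc(G/P,\cE)$, whereas the paper applies it to the $H\times P$-action on $G$ and surjects onto the principal series by integration along $P$-orbits; and that for non-irreducible $\pi$ the paper runs an explicit induction on the length of $\pi$ using short exact sequences, a step your plan should make explicit (or else quote the subrepresentation theorem in its general finitely generated, not merely irreducible, form).
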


\begin{remark}\label{rem:compa}
To say that $i_0$ is onto is the same as to say that any $(\fh,-\chi)$-equivariant linear functional on $\pi^{HC}$ is continuous in the topology induced from $\pi$. This property is called automatic continuity. It is clear that if it holds for $(\fh,\chi)$ then it holds for any $(\fh',\chi')$ with $\fh\subset \fh', \, \chi'|_\fh=\chi$. Automatic continuity is known to hold if $\chi$ is trivial  and $\fh$ is either a symmetric subalgebra of $\fg$ \DimaB{fixed by an involution that commutes with the Cartan involution} (by  \cite[Theorem 2.1]{vdBD} and \cite[Theorem 1]{BD}) or  the Lie algebra of the nilradical of a minimal parabolic subgroup (by \cite[Theorem 11.4]{BK}).
\end{remark}

\Cref{thm:compa,rem:compa} imply \Cref{thm:IntroCompa,cor:main}.

\begin{remark}
Twisted automatic continuity does not hold for general spherical pairs. For example, if $H$ is the nilradical of a Borel subgroup of $G$, $\chi$ is a non-degenerate unitary character of $H$ and $\pi$ is a principal series representation then $\oH^0(\fh, \pi'\otimes(-\chi))$ is one-dimensional, while the dimension of $\oH^0(\fh, (\pi^{\HC})^{*}\otimes(-\chi))$ equals the size of the Weyl group, see \cite{Kos}.
\change{However, we do expect the twisted automatic continuity to hold provided $\fh$ is self-normalising.}
\end{remark}

For the proof of \Cref{thm:compa} we will need the following dual version of the Casselman subrepresentation theorem (cf. {\cite[Theorem 8.21]{CM}}).

\begin{thm}\label{thm:CasSubRep}
Let $\pi\in \cM(G)$ be irreducible and let $P$ be a minimal parabolic subgroup of $G$. Then there exists an irreducible
finite-dimensional
representation $\sigma$ of $P$ and an epimorphism $\Ind_P^G(\sigma)\onto \pi$.
\end{thm}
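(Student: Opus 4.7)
The plan is to deduce this dual statement from the classical Casselman \emph{embedding} theorem by passing through smooth contragredients. Write $\tilde\pi$ for the smooth contragredient of $\pi$. Standard results (see \cite{CasGlob,Wal2}) show that $\tilde\pi\in\cM(G)$ is again irreducible, and that $(\tilde\pi)^{\HC}$ is canonically the $(\fg,K)$-module contragredient of $\pi^{\HC}$; in particular it is an irreducible Harish--Chandra module.

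Next, I would apply the classical Casselman embedding theorem to $(\tilde\pi)^{\HC}$: there exist an irreducible finite-dimensional representation $\sigma'$ of $P$ and an $(\fg,K)$-embedding $(\tilde\pi)^{\HC}\hookrightarrow\Ind_P^G(\sigma')^{\HC}$. Since $G/P$ is compact, $I:=\Ind_P^G(\sigma')$ lies in $\cM(G)$, so by the Casselman--Wallach equivalence this embedding lifts uniquely to a closed embedding $\tilde\pi\hookrightarrow I$ in $\cM(G)$.

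Taking smooth contragredients, an exact arrow-reversing functor on $\cM(G)$, then yields an epimorphism $\tilde I\twoheadrightarrow\pi$. It remains to identify $\tilde I$ with $\Ind_P^G(\sigma)$ for some irreducible finite-dimensional representation $\sigma$ of $P$. Since $G/P$ is compact, integration over $G/P$ provides a canonical $G$-invariant pairing of $I=\Ind_P^G(\sigma')$ with $\Ind_P^G\bigl((\sigma')^{*}\otimes\delta_P^{-1}\bigr)$, where $\delta_P$ is the modular character of $P$; this pairing identifies $\tilde I$ with $\Ind_P^G(\sigma)$ for $\sigma:=(\sigma')^{*}\otimes\delta_P^{-1}$. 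Tensoring by a character and passing to the linear dual both preserve irreducibility and finite-dimensionality of a $P$-representation, so $\sigma$ has the required form and the composed map $\Ind_P^G(\sigma)\twoheadrightarrow\pi$ is the desired epimorphism.

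The main step requiring care is the last identification: keeping track of the conventions for induction (normalized vs.\ unnormalized, and the side on which the modular character appears) is the only place where one might easily slip up. The classical Casselman embedding theorem is used as a black box, and the Casselman--Wallach lift in the middle step is automatic once one knows $I\in\cM(G)$, which follows from compactness of $G/P$ and finite-dimensionality of $\sigma'$.
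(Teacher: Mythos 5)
Your proof is correct and is essentially the argument the paper has in mind: the paper gives no proof, merely labelling the statement a ``dual version'' of the Casselman subrepresentation theorem and citing \cite[Theorem 8.21]{CM}, so the intended argument is precisely your dualization — embed $\tilde\pi$ via the classical theorem, lift with Casselman--Wallach, and take smooth contragredients, identifying $\widetilde{\Ind_P^G(\sigma')}$ with $\Ind_P^G((\sigma')^*\otimes\delta_P^{-1})$. Your caveat about normalization conventions is the only delicate point, and it is immaterial here since twisting by $\delta_P^{\pm1}$ preserves irreducibility and finite-dimensionality of $\sigma$.
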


\DimaB{Here, $\Ind_P^G(\sigma)$ denotes the non-normalized parabolic induction, i.e. the space of smooth functions from $G$ to $\sigma$ satisfying
$f(px)=\sigma(p)f(x).$
 }

\begin{cor}\label{cor:CasQuot}
Any irreducible $\pi\in \cM(G)$ is a quotient of a principal series representation, i.e. $\pi=\Ind_P^G(V\otimes \psi)$, where $P$ is a minimal parabolic subalgebra of $G$, $V$ is a finite-dimensional  algebraic representation of the reductive quotient $M$ of $P$, and $\psi$ is a character of $M$.
\end{cor}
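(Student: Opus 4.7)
The corollary is an almost immediate consequence of Theorem \ref{thm:CasSubRep}, so my plan is essentially to unpack the finite-dimensional irreducible representations of the minimal parabolic $P$. The plan has three steps.

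First, I invoke Theorem \ref{thm:CasSubRep} to obtain an irreducible finite-dimensional representation $\sigma$ of $P$ and a $G$-equivariant surjection $\Ind_P^G(\sigma) \onto \pi$. It then remains to rewrite $\sigma$ in the form $V \otimes \psi$ with $V$ algebraic on $M$ and $\psi$ a character of $M$, where $M = P/N$ is the Levi quotient.

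Second, I argue that the unipotent radical $N$ acts trivially on $\sigma$. Since $\fn \subset \fp$ is a nilpotent ideal, Lie's theorem (applied to the solvable Lie subalgebra generated by $\fn$ inside $\End \sigma$) yields a simultaneous eigenvector for $\fn$ with some weight $\lambda \in (\fn/[\fn,\fn])^*$. Because $\fn$ is an ideal in $\fp$, the Levi $M$ acts on the set of $\fn$-weights, and the $\lambda$-weight space is sent into the $(m\cdot\lambda)$-weight space for $m\in M$. Hence the sum of $M$-translates of the $\lambda$-weight space is a $P$-subrepresentation, and irreducibility forces $M$ to fix $\lambda$. But the $M$-weights occurring on $\fn/[\fn,\fn]$ are the simple restricted roots, none of which is $M$-fixed unless it is zero; so $\lambda = 0$ and $N$ acts trivially. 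Consequently $\sigma$ factors through $M$.

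Third, I decompose $\sigma$ as a representation of $M$. Using the Langlands decomposition $M = M_d A$ with $M_d$ compact-modulo-center and $A$ a split abelian Lie group, the central split torus $A$ acts on the irreducible $\sigma$ by a (quasi-)character $\psi_0 \colon A \to \bC^*$ by Schur's lemma. Extend $\psi_0$ to a character $\psi$ of $M$ by declaring it trivial on $M_d$ (or more precisely, on a complement in the abelianization), and set $V := \sigma \otimes \psi^{-1}$. Then $A$ acts trivially on $V$, so $V$ is really a representation of the quotient $M/A$, which is compact modulo a finite group. All of its finite-dimensional irreducible representations are unitarizable and carry a natural algebraic structure inherited from the algebraic structure on $M$. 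This gives $\sigma \cong V \otimes \psi$ in the desired form, and substitution in the surjection yields $\pi$ as a quotient of $\Ind_P^G(V \otimes \psi)$, completing the proof.

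The main subtlety is the last step: one has to be careful that the ambiguity in extending $\psi_0$ from $A$ to all of $M$, and in identifying the compact quotient with an algebraic group, matches with the stated meaning of ``algebraic representation of $M$.'' Once the Langlands decomposition of the Levi is invoked and Schur's lemma applied to the split center, however, this is routine and purely structural; the real content of the statement is supplied by Theorem \ref{thm:CasSubRep}.
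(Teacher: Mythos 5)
Your proposal is correct and follows the same route the paper intends: the corollary is stated as an immediate consequence of Theorem~\ref{thm:CasSubRep}, and the paper gives no further argument, so your contribution is simply to spell out the standard structure theory (triviality of the nilradical action on an irreducible finite-dimensional representation of $P$ via Lie's theorem and the nonvanishing of the restricted roots on $\fa$, then the factorization over the Levi via Schur's lemma on the split center). The details you supply are routine and essentially right; the only cosmetic looseness is in Step 2, where the cleanest phrasing is that the finitely many $\fn$-weights are permuted by the connected group $A$, hence each is $A$-fixed, hence zero --- but this is exactly what your argument amounts to.
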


\RamiC{We will also need the following definition and \DimaB{well-known} lemma.
\begin{defn}
A finite-dimensional \DimaB{complex} representation of \DimaB{a real} algebraic group is called tempered if all its matrix coefficients are tempered functions.
\end{defn}

\DimaC{Note that a finite-dimensional algebraic representation is always tempered, and the tensor product of tempered representations is tempered.
In particular, the representation $V\otimes \psi$ in \Cref{cor:CasQuot} is tempered.
 Note also that the modular character of any linear real algebraic group is always tempered, since it factors through the reductive quotient.}

\begin{lem}\label{lem:Int}
Let $(\pi,V)$ be a finite-dimensional tempered  representation of a parabolic subgroup $P\subset G$. Let $  \Delta_P$ denote the modular character of $P$. Let $G\times P$ act on $\Sc(G,V)$ by $$((g,p)f)(x):=\Delta_P(p)\pi(p)f(p^{-1}xg).$$ Then we have an epimorphism of $G$ representations
$$\varphi:\Sc(G,V)_{0\times\mathfrak{p}}\onto\Ind_P^G(V).$$
\end{lem}

\begin{proof}
Fix a right-invariant Haar measure $d\mu$ on $P$. Let $f\in \Sc(G,V)$. Define $$\bar\varphi(f)(x):=\int_{P}\pi(p^{-1})\cdot f(px)d\mu(p).$$
Note that the integral converges, since $f$ is a Schwartz function and $V$ is tempered.

We have to prove that
\begin{enumerate}
\item \label{it:barInd} $\bar\varphi(f)\in \Ind_P^G(V)$, i.e. $\bar\varphi(f)(px)=\pi(p)\bar\varphi(f)(x)$.
\item For every $\alp \in \mathfrak{p}, \bar\varphi(\alp f)=0$.
\item \label{it:barG} $\bar\varphi$ is $G$-equivariant, i.e. $(\bar\varphi(f))(xg)=(\bar\varphi(h))(x),$ where $h(x)=f(xg)$.
\item $\Im(\bar \varphi)=\Ind_P^G(V)$. \label{it:barOnto}
\end{enumerate}
Formulas (\ref{it:barInd}-\ref{it:barG}) are straightforward. Let us prove \eqref{it:barOnto}. Let $P \backslash G=\bigcup_{i=1}^k U_i$ be an open Nash cover such that there exist Nash sections $s_i:U_i\to G$ of the projection $pr:G\to P \backslash G$. Let $f\in \Ind_P^G(V)$. Using partition of unity on $P \backslash G$ we can assume that $f$ is supported in $pr^{-1}(U_i)$ for some $i$. Note the support of $f$ is $P$-invariant and hence it is a preimage of a compact set $S\subset U_i\subset P \backslash G$. Let $h:=f\circ s_i$. Then $h$ is compactly supported and thus $h\in \Sc(U_i)$. Thus there exists $\tilde h \in \Sc(P\times U_i,V)$ such that $h(x)=\int_{p\in P} \tilde h(p,x)\mu(p)$. Define $\tilde f \in \Sc(pr^{-1}(U_i),V)$ by
$$\tilde f(g):=\pi(g\cdot (s_i(pr(g)))^{-1})\tilde h(g\cdot (s_i(pr(g)))^{-1},pr(g)).$$
It is easy to see that $\bar \varphi (\tilde f)=f$. Indeed,
\begin{multline}
\bar \varphi (\tilde f)(x) = \int_{p\in P}\pi(p^{-1})\cdot \tilde f(px)d\mu(p)=\\=\int_{p\in P}\pi(p^{-1})\pi(pxs_i(pr(x)^{-1})\cdot \tilde h(pxs_i(pr(x))^{-1},pr(x))d\mu(p)=\\ =\pi(xs_i(pr(x)^{-1}) \int_{p\in P} \tilde h(pxs_i(pr(x))^{-1},pr(x))d\mu(p)=\\
=\pi(xs_i(pr(x)^{-1})h(pr(x))=\pi(xs_i(pr(x)^{-1})f(s_i(pr(x)))
=f(x)
\end{multline}
\end{proof}
}
\begin{proof}[Proof of \Cref{thm:compa}]
\begin{enumerate}[(i)]
\item 
\RamiC{Assume first that
$\pi=\Ind_P^G(V\otimes \psi)$ is a principal series representation. Then by \Cref{lem:Int} we have an epimorphism $\Sc(G,V)_{(\fh\times\mathfrak{p},\chi\otimes(-d \psi))}\onto \pi_{(\fh,\chi)}$.} Since $\Sc(G,V)_{(\fh\times\mathfrak{p},\chi\otimes(- d\psi))}$ is separated and finite-dimensional by Theorem \ref{thm:main}, so is $\pi_{(\fh,\chi)}$.

Now,
note that if $0\to \sigma \to \pi \to \tau\to 0$ is an exact sequence of representations in $\cM(G)$, and $\sigma_{(\fh,\chi)}$ and $\tau_{(\fh,\chi)}$
are separated and finite-dimensional, then so is $\pi_{(\fh,\chi)}$. Conversely, if $\pi_{(\fh,\chi)}$
are separated and finite-dimensional then $\tau_{(\fh,\chi)}$
is separated and finite-dimensional. Since any representation in $\cM(G)$ has finite length, and any irreducible representation is a quotient of a principal series representation by the
 Casselman subrepresentation theorem, we get that $\pi_{(\fh,\chi)}$ is separated and finite-dimensional for any $\pi\in \cM(G)$.

\item The image of $\Phi_0$ is a dense subspace since $\pi^{\HC}$ is dense in $\pi$. Thus, by (i),  $\Phi_0$ is an epimorphism.

\item The inclusion $i_0$ is given by restriction of the equivariant functional on $\pi^{\HC}$. It is indeed an inclusion since $\pi^{\HC}$ is dense. Since $\pi_{(\fh,\chi)}$ is separated and finite-dimensional, the natural pairing between it and $(\pi')^{(\fh,\chi)}$ is a perfect pairing and thus $\dim \pi_{(\fh,\chi)} = \dim (\pi')^{(\fh,\chi)}$. Similarly,
 the dual space to $ \pi^{\HC}_{(\fh,\chi)}$ is  $((\pi^{\HC})^{*})^{(\fh,\chi)}$, and thus either they are of the same finite dimension or they are both infinite-dimensional. Thus $\Phi_0$ is an isomorphism if and only if $i_0$ is.
\end{enumerate}
\end{proof}

In fact, the space $ \pi^{\HC}_{(\fh,\chi)}$ is always finite dimensional, see \cite{KO}. In the next subsection we show that the results of \cite{KS2} imply that if $K$ and $H$ are in general position then $\oH_p(\fh, \pi^{\HC})$ is finite dimensional for all $p$.

\subsection{Finiteness of higher homologies of Harish-Chandra modules}
Let $P<G$ be a minimal
parabolic subgroup such that $PH$ is open in $G$ and let $P=MAN$ be a Langlands decomposition for $P$. We will assume in the sequel that
\begin{equation}\label{=gen}
\fg = \fh +\fa +\fk.
\end{equation}
\RamiC{Note that for any real spherical $H\subset G$ there exists a maximal compact $K\subset G$ satisfying this condition, see \cite[\S 5]{KS2}.}

\par We denote by $\cU(\fg)$ the universal enveloping algebra of $\fg$ and
by $\cZ(\fg)$ its center. The following is a generalization of the Casselman-Osborne lemma for spherical subalgebras

\begin{lem}[{\cite[Lemma 5.5]{KS2}}]\label{Cas-Os} \DimaB{If \eqref{=gen} holds then}
there exists a
finite subset $\cY\subset \cU(\fg)$ such that
$$ \cU(\fg)= \cU(\fh) \cY \cZ(\fg) \cU (\fk)\, .$$
\end{lem}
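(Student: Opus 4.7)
The proof splits into a Poincar\'e-Birkhoff-Witt step, a reduction to a commutative statement about the associated graded algebra, and a Chevalley-restriction argument.

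First, I would derive the identity $\cU(\fg) = \cU(\fh)\, \cU(\fa)\, \cU(\fk)$ directly from the hypothesis $\fg = \fh + \fa + \fk$ (which is not assumed to be a direct sum). Choose an ordered basis of $\fg$ by first taking a basis of $\fh$, extending it to a basis of $\fh + \fa$ by adjoining vectors from $\fa$, and then extending to a basis of $\fg$ by adjoining vectors from $\fk$. The PBW theorem for this ordered basis realizes $\cU(\fg)$ as a product $\cU(\fh)\cdot \cU(\fa')\cdot \cU(\fk')$ for subspaces $\fa' \subset \fa$ and $\fk' \subset \fk$, which is contained in $\cU(\fh)\, \cU(\fa)\, \cU(\fk)$. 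The lemma then reduces to producing a finite subset $\cY \subset \cU(\fg)$ such that $\cU(\fa) \subset \cU(\fh)\, \cY\, \cZ(\fg)\, \cU(\fk)$.

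Second, I would pass to the associated graded with respect to the degree filtration on $\cU(\fg)$. Since $\operatorname{gr}\cU(\fg) = S(\fg)$ and $\operatorname{gr}\cZ(\fg) = S(\fg)^G$, the task becomes the commutative statement that $S(\fg)$ is finitely generated as a module over the subalgebra $R := S(\fh)\cdot S(\fg)^G\cdot S(\fk) \subset S(\fg)$. A standard filtration-to-graded lifting (induction on degree, matching principal symbols) then turns any finite graded generating set of $S(\fg)$ over $R$ into the required $\cY \subset \cU(\fg)$ producing $\cU(\fg) = \cU(\fh)\,\cY\,\cZ(\fg)\,\cU(\fk)$.

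Third, to establish the finite generation of $S(\fg)$ over $R$, I would apply the graded Nakayama lemma: it suffices to see that $S(\fg)/R_{+} S(\fg)$ is finite-dimensional. Modding out in stages, first by $(\fh + \fk)\, S(\fg)$ gives the symmetric algebra $S(\fa / \fa_0)$ where $\fa_0 := \fa \cap (\fh + \fk)$, and then modding out further by the image of $S(\fg)^G_{+}$ gives, via the Chevalley restriction isomorphism $S(\fg)^G \cong S(\fa)^W$ (with $W = N_K(\fa)/Z_K(\fa)$ the restricted Weyl group), the $W$-coinvariant algebra of $S(\fa/\fa_0)$. This last algebra is finite-dimensional (of dimension $|W|$) by Chevalley-Shephard-Todd provided $\fa_0 = 0$.

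The main obstacle will be verifying the hypothesis $\fa_0 = 0$ after the generic $\Ad(a)$-shift of $K$, together with the applicability of the Chevalley restriction theorem in the real setting $(\fg,\fa)$. Both are expected consequences of the real-spherical hypothesis on $\fh$ and the generic choice of $a \in A$, in the spirit of \cite[\S 5]{KS2}; once these are secured, the three steps above compose to yield the desired finite set $\cY$.
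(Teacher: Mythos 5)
First, note that the paper itself gives no proof of this lemma -- it is imported verbatim from \cite[Lemma 5.5]{KS2} -- so your argument has to stand on its own. Its scaffolding is sound: PBW applied to an ordered basis adapted to the flag $\fh\subset\fh+\fa\subset\fg$ does give $\cU(\fg)=\cU(\fh)\,\cU(\fa)\,\cU(\fk)$; the symbol-lifting induction correctly reduces the lemma to the graded statement that $S(\fg)$ is a finite module over $R=S(\fh)\,S(\fg)^{\fg}\,S(\fk)$ (with $\operatorname{gr}\cZ(\fg)=S(\fg)^{\fg}$ via symmetrization); and graded Nakayama correctly reduces that to $\dim S(\fg)/R_{+}S(\fg)<\infty$, where $R_{+}S(\fg)=(\fh+\fk)S(\fg)+S(\fg)^{\fg}_{+}S(\fg)$.

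The gap is in the final, decisive step. Viewing $S(\fg)$ as polynomials on $\fg^{*}$, the quotient $S(\fg)/(\fh+\fk)S(\fg)$ is the ring of polynomial functions on $(\fh+\fk)^{\perp}\subset\fg^{*}$, which under a $\theta$-compatible invariant form corresponds to $\fh^{\perp}\cap\mathfrak{p}\subset\fg$. So the image of $S(\fg)^{\fg}_{+}$ there is the restriction of the positive-degree invariants to $\fh^{\perp}\cap\mathfrak{p}$, \emph{not} to $\fa$, and what must actually be shown (after complexifying, so that a Nullstellensatz argument applies) is that $(\fh^{\perp}\cap\mathfrak{p})_{\bC}$ meets the nilpotent cone of $\fg_{\bC}$ only in $0$. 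The hypothesis $\fg=\fh+\fa+\fk$ only says that $\fh^{\perp}\cap\mathfrak{p}$ is transverse to $\fa^{\perp}\cap\mathfrak{p}$, i.e.\ is a graph over a subspace of $\fa$ inside $\mathfrak{p}$; it need not lie in $\fa$, and a subspace of $\mathfrak{p}_{\bC}$ not contained in $\fa_{\bC}$ can contain nonzero nilpotents (already in $\sll_2(\R)$ the element $H+i(E+F)$ is nilpotent). Hence the Chevalley restriction theorem, which concerns restriction to $\fa$, does not apply as you use it, and the condition you flag as the remaining obstacle, $\fa_0=\fa\cap(\fh+\fk)=0$, is beside the point: if $\fa_0\neq 0$ the space only gets smaller, while if $\fa_0=0$ nothing yet prevents $\fh^{\perp}\cap\mathfrak{p}$ from meeting the complex nilpotent cone. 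That avoidance is the real content of the lemma -- it is exactly where real sphericity and the genericity of the $\Ad(a)$-shift of $K$ must enter -- and the proposal never formulates it. Two secondary caveats: the restricted Chevalley map $S(\fg)^{\fg}\to S(\fa)^{W}$ need not be surjective for real groups, so "dimension $|W|$ by Chevalley--Shephard--Todd" is not available anyway (one instead uses that $\fa_{\bC}$ consists of semisimple elements, so $S(\fa)$ is finite over the restricted invariants); and your graded statement is a priori strictly stronger than the lemma, since the associated graded of a product of filtered subspaces can exceed the product of the associated gradeds -- harmless if the graded statement is true, but it means a failure there would force a genuinely noncommutative argument (e.g.\ a Harish-Chandra-type projection $\cU(\fg)\to\cU(\fa)$ with a $\rho$-shift).
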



\begin{prop} \label{HC} Let $V$ be a Harish-Chandra module for $(\fg, K)$.
\DimaB{If \eqref{=gen} holds then}
\begin{enumerate}
\item $V$ is finitely generated as an $\fh$-module.
\item $\oH_p(\fh, V)$ is finite dimensional for all $p$.
\end{enumerate}
\end{prop}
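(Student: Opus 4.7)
The plan is to deduce both statements from \Cref{Cas-Os} together with the standard machinery of finitely generated modules over the enveloping algebra of a finite-dimensional Lie algebra.

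For (1), I would choose generators $v_1,\ldots,v_n$ of $V$ as a $\cU(\fg)$-module (finitely many since $V\in \cM(\fg,K)$). By $K$-finiteness, the subspace $W_1:=\cU(\fk)\{v_1,\ldots,v_n\}$ is finite dimensional. Since $V$ is admissible and finitely generated, $\cZ(\fg)$ acts on $V$ through a finite dimensional quotient $\cZ(\fg)/I$ (where $I$ is an ideal of finite codimension annihilating $V$), so $W_2:=\cZ(\fg)W_1$ is finite dimensional. As $\cY$ is finite, $W_3:=\cY W_2$ is also finite dimensional. By \Cref{Cas-Os},
\[V=\cU(\fg)\{v_1,\ldots,v_n\}=\cU(\fh)\cY\cZ(\fg)\cU(\fk)\{v_1,\ldots,v_n\}=\cU(\fh)W_3,\]
so $V$ is generated over $\cU(\fh)$ by the finite dimensional subspace $W_3$, proving (1).

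For (2), I would invoke purely homological properties of $\cU(\fh)$. The algebra $\cU(\fh)$ is Noetherian (PBW plus Hilbert basis theorem applied to the associated graded $S(\fh)$), and its global dimension equals $d:=\dim\fh$, as witnessed by the Chevalley--Eilenberg resolution $\cU(\fh)\otimes\bigwedge^\bullet\fh\twoheadrightarrow\bC$. Combining (1) with Noetherianity, $V$ admits a resolution
\[0\to Q_d\to Q_{d-1}\to\cdots\to Q_0\to V\to 0\]
by finitely generated free $\cU(\fh)$-modules $Q_i\cong\cU(\fh)^{m_i}$. Since
\[\oH_p(\fh,V)=\Tor_p^{\cU(\fh)}(\bC,V)\]
is computed as the $p$-th homology of the complex $Q_\bullet\otimes_{\cU(\fh)}\bC$, whose terms $Q_i\otimes_{\cU(\fh)}\bC\cong\bC^{m_i}$ are all finite dimensional, each $\oH_p(\fh,V)$ is a subquotient of a finite dimensional vector space and therefore finite dimensional.

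No step is especially hard; the only subtle point is the invocation of \Cref{Cas-Os} together with both the $K$-finiteness and the $\cZ(\fg)$-finiteness of a Harish-Chandra module, which are precisely what allow one to contract the $\cU(\fk)$ and $\cZ(\fg)$ factors to a finite dimensional subspace. Once finite generation over $\cU(\fh)$ is established, part (2) is a standard consequence of Noetherianity and finite global dimension of $\cU(\fh)$.
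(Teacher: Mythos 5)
Your proposal is correct and follows essentially the same route as the paper: part (1) is obtained by contracting the $\cU(\fk)$, $\cZ(\fg)$ and $\cY$ factors of the decomposition in \Cref{Cas-Os} to a finite-dimensional generating subspace using $K$-finiteness and $\cZ(\fg)$-finiteness, and part (2) follows from Noetherianity of $\cU(\fh)$ via a resolution by finitely generated projectives whose $\fh$-coinvariants form a complex of finite-dimensional spaces. The only difference is your additional (and unnecessary) appeal to the finite global dimension of $\cU(\fh)$ to truncate the resolution; the paper simply uses an arbitrary finitely generated projective resolution.
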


\begin{proof} We recall that every Harish-Chandra module is $\cZ(\fg)$-finite, i.e.
$V$ is annihilated by an ideal $\cI\triangleleft \cZ(\fg)$ of finite co-dimension.
As $V$ is finitely generated as an $(\fg, K)$-module there is a finite dimensional
$K$-invariant subspace $W\subset V$ which generates $V$ as $\fg$-module. In view of
Lemma \ref{Cas-Os},  this implies (1).

To prove (2) note that $\cU(\fh)$ is Noetherian and thus (1) implies that $V$ has a projective resolution consisting of finitely generated modules. Applying the functor of $\fh$-coinvariants to this resolution, we obtain a complex of finite dimensional vector spaces. The homologies of this complex are isomorphic to $\oH_p(\fh, V)$.
\end{proof}


\begin{thebibliography}{P Dirac}

\bibitem[\href{http://imrn.oxfordjournals.org/cgi/reprint/2008/rnm155/rnm155?ijkey=bddq0itkXKrVjlG&keytype=ref}{AG08}]{AGSc} A. Aizenbud,  D.
Gourevitch: {\it Schwartz functions on Nash Manifolds,}
International Mathematics Research Notices, Vol. 2008, n.5,
Article ID rnm155, 37 pages. DOI: 10.1093/imrn/rnm155. See also
arXiv:0704.2891 [math.AG].

\bibitem[\href{http://arxiv.org/PS_cache/arxiv/pdf/0802/0802.3305v2.pdf}{AG10}]{AGRhamShap}A. Aizenbud,  D.
Gourevitch: {\it De-Rham theorem and Shapiro lemma for Schwartz functions on Nash manifolds.} Israel Journal of Mathematics  \textbf{177}, Number 1, 155-188 (2010). See also arXiv:0802.3305v2 [math.AG].

\bibitem[AG13]{AGST} A. Aizenbud,  D. Gourevitch: {\it
Smooth Transfer of Kloostermann Integrals}, American Journal of Mathematics \textbf{135}, 143-182 (2013).
See also arXiv:1001.2490[math.RT].

\bibitem[AGS15]{AGS2} A. Aizenbud,  D. Gourevitch, S. Sahi: {\it
Twisted homology for the mirabolic nilradical},  Israel Journal of Mathematics  {\bf 206}, 39-88 (2015). See also arXiv: 1210.5389.


\bibitem[AGM]{AGM} A. Aizenbud,  D. Gourevitch, A. Minchenko: {\it
Holonomicity of relative characters and applications to multiplicity bounds for spherical pairs}, arXiv:1501.01479. To appear in Selecta Mathematica.

\bibitem[vdBD88]{vdBD} E. van den Ban, P. Delorme: {\it Quelques proprietes des representations spheriques
pour les espaces symetriques reductifs}. J. Funct. Anal. {\bf 80}, 284-307
(1988).

\bibitem[BD92]{BD}
J.-L. Brylinski, P. Delorme: {\it Vecteurs distributions H-invariants pour les s´eries principales g´en´eralis´ees d'espaces sym´etriques r´eductifs et prolongement m´eromorphe d'int´egrales d'Eisenstein}, Invent. Math. {\bf 109}, 619-664 (1992).

\bibitem[BK14] {BK} J. Bernstein and B. Kr\"otz, {\it Smooth Fr\'echet globalizations of Harish-Chnadra
modules},   Israel Journal of Mathematics {\bf 199}, 45-111 (2014).  See also arXiv:0812.1684,

\bibitem[BCR98]{BCR} J. Bochnak, M. Coste, M-F. Roy:
{\it Real Algebraic Geometry}  Springer-Verlag, Berlin (1998).

\bibitem[Cas89]{CasGlob} W. Casselman:  \textit{Canonical extensions of Harish-Chandra modules to representations of G,}
Can. J. Math., Vol. XLI, No. 3, pp. 385-438 (1989).

\bibitem[CHM00]{CHM}
W.~Casselman , H.~Hecht and D. ~Mili\v{c}i\'{c}, {\it Bruhat filtrations and Whittaker vectors for real groups}, Proceedings of Symposia in Pure Mathematics, vol. 628, 2000.

\bibitem[CM82]{CM}W. Casselman, D. Mili\v ci\'c:
\textit{Asymptotic behavior of matrix coefficients of admissible representations}.
Duke Math. J.  \textbf{49}, Number 4, 869-930 (1982).


\bibitem [Gro61]{Gro}A. Grothendieck {\it Elements de geometrie algebrique. III. Etude cohomologique des faisceaux coherents. I,} Inst. Hautes Etudes Sci. Publ. Math. No. 11 (1961), 167 pp.

\bibitem[Tre67]{Tre} F. Treves: {\it Topological vector spaces, distributions and kernels}, Academic Press, New York-London  xvi+624 pp (1967).



\bibitem[HT98]{HT}
H.Hecht and J.L. Taylor, {\it A remark on Casselman's comparison theorem}, Geometry and representation theory of real and $p$-adic groups (C\'{o}rdoba, 1995), 139-146, Progr. Math., 158, Birkh\"{a}user Boston, Boston, MA, 1998.


\bibitem[HS83]{HS} H. Hecht and W. Schmid, {\it Characters, asymptotics and $\fn$-homology of
Harish-Chandra modules}, Acta. math. {\bf 151} (1983), 49--151.

\bibitem [Kna88] {Kna} A. Knapp: {\it  Lie groups, Lie algebras, and cohomology,} Mathematical Notes, {\bf 34}. Princeton University Press, Princeton, NJ (1988), xii+510 pp.

\bibitem[KO13]{KO}
T. Kobayashi, T. Oshima:{\it Finite multiplicity theorems for induction and restriction}, Advances in
Mathematics, \textbf{248}, 921-944 (2013).

\bibitem[Kos78]{Kos}B. Kostant: \textit{On Whittaker vectors and
representation theory.}, Invent. Math. \textbf{48}, 101-184 (1978).

\bibitem [KS1] {KS1} B. Kr\"otz and H. Schlichtkrull, {\it Finite orbit decomposition of real flag
manifolds}, arXiv: 1307.2375, \change{J. Eur. Math. Soc.}, to appear

\bibitem [KS2] {KS2} \bysame, {\it Multiplicity bounds and the subrepresentation theorem for
real spherical spaces}, arXiv: 1309.0930, Trans. AMS, to appear.



\bibitem[Shi87]{Shi} M. Shiota: {\it Nash Manifolds},  Lecture Notes in Mathematics \textbf{1269} (1987).

\bibitem[Sun15]{Sun} B. Sun: {\it Almost linear Nash groups}, Chin. Ann. Math. Ser. B 36 , no. 3, 355-400. (2015). See also arXiv:1310.8011.

\bibitem[Tre67]{Treves} F. Tr\`{e}ves: {\it Topological vector spaces, distributions and kernels}, Academic Press, New York-London 1967 xvi+624 pp.

\bibitem[Wal92]{Wal2}N. Wallach: \textit{Real Reductive groups II}, Pure and
Applied Math. \textbf{132}, Academic Press, Boston, MA (1992).

\end{thebibliography}
\end{document}